\numberwithin{equation}{section} %% Comment out for sequentially-numbered
\numberwithin{figure}{section} %% Comment out for sequentially-numbered
\theoremstyle{plain}
\theoremstyle{plain}
\newtheorem{thm}{Theorem}
  \theoremstyle{definition}
  \newtheorem{defn}[thm]{Definition}
  \theoremstyle{remark}
  \newtheorem*{acknowledgement*}{Acknowledgement}
  \theoremstyle{plain}
  \newtheorem{prop}[thm]{Proposition}
  \theoremstyle{plain}
  \newtheorem*{thm*}{Theorem}
  \theoremstyle{plain}
  \newtheorem{lem}[thm]{Lemma}
  \theoremstyle{plain}
  \newtheorem{cor}[thm]{Corollary}
  \theoremstyle{remark}
  \newtheorem*{rem*}{Remark}
\DeclareMathOperator{\rad}{rad}
\DeclareMathOperator{\minrad}{rmin}
\DeclareMathOperator{\maxrad}{rmax}
\DeclareMathOperator{\cdim}{cdim}
\begin{document}

\title{A ratio ergodic theorem for multiparameter non-singular actions}

\author{Michael Hochman}

\date{\today{}}
\begin{abstract}
We prove a ratio ergodic theorem for non-singular free $\mathbb{Z}^{d}$
and $\mathbb{R}^{d}$ actions, along balls in an arbitrary norm. Using
a Chacon-Ornstein type lemma the proof is reduced to a statement about
the amount of mass of a probability measure that can concentrate on
(thickened) boundaries of balls in $\mathbb{R}^{d}$. The proof relies
on geometric properties of norms, including the Besicovitch covering
lemma and the fact that boundaries of balls have lower dimension than
the ambient space. We also show that for general group actions, the
Besicovitch covering property not only implies the maximal inequality,
but is equivalent to it, implying that further generalization may
require new methods.
\end{abstract}

\curraddr{Fine Hall, Washington Road, Princeton University, Princeton, NJ 08544}

\email{hochman@math.princeton.edu}

\maketitle

\section{\label{sec:Introduction}Introduction}

Consider a non-singular action of a group $G$ on a standard $\sigma$-finite
measure space $(\Omega,\mathcal{B},\mu)$, which we denote $\omega\mapsto T^{g}\omega$;
we shall assume that the action is free and ergodic. From the action
on $\Omega$ there is induced an isometric linear action on $L^{\infty}$,
also denoted $T^{g}$, given by $T^{g}f(\omega)=f(T^{g^{-1}}\omega)$;
and this in turn induces an isometric linear action on the Banach
dual of $L^{\infty}$, whose restriction to $L^{1}$ is given by $\widehat{T}^{g}f=(T^{g^{-1}}f)\cdot\frac{d(g\mu)}{d\mu}$
(In the measure preserving case the Radon-Nikodym derivative is identically
$1$ and $\widehat{T}$ reduces to the usual Koopman operator, $\widehat{T}^{g}f(\omega)=f(T^{g^{-1}}\omega)$).

\subsection{The ratio ergodic theorem}

For $\mathbb{Z}$-actions, there is in this setting an analogue to
Birkhoff's ergodic theorem which is due to Hopf \cite{H37}, later
generalized by Hurewicz \cite{H44} to a {}``measureless'' statement
(see also \cite{Halmos46,Oxtoby48}), and by Chacon-Ornstein to the
operator setting \cite{CA60}. Hopf's ratio ergodic theorem states
that, for an ergodic $\mathbb{Z}$-action generated by a transformation
$T:\Omega\rightarrow\Omega$, for any $f,g\in L^{1}$ with $\int gd\mu\neq0$,
the following ratios converge almost surely: \[
R_{n}(f,g)=\frac{\sum_{k=0}^{n}\widehat{T}^{k}f}{\sum_{k=0}^{n}\widehat{T}^{k}g}\]
If in addition $T$ is conservative (i.e. has no nontrivial wandering
sets), or if the one-sided sum is replaced with the symmetric sum
from $-n$ to $n$, then the limit is the constant function $\int f/\int g$.
Note that for probability-preserving actions this is equivalent to
the usual ergodic theorem; $R_{n}$ becomes an ergodic average by
setting $g\equiv1$. For general actions this equivalence is false;
for example, for measure-preserving actions of an infinite measure
the ergodic averages converge to $0$, and not to the mean.

While the ergodic theorem for measure-preserving actions on probability
spaces has been broadly generalized to the group setting \cite{OW87},
the ratio theorem has not seen similar extensions, even to $\mathbb{Z}^{d}$-actions.
For a time it was thought no such extension was possible. The natural
thing to try in $\mathbb{Z}^{d}$ is to sum over the cubes $Q_{n}=[0;n]^{d}$,
but there is a counter-example, due to Brunel and Krengel, showing
that these ratios may diverge for $d>1$ \cite{K85}. However, recently
J. Feldman \cite{F07} proved a partial result for $\mathbb{Z}^{d}$,
showing that if the generators of the action act conservatively then
the ratio theorem holds for sums over the symmetric cubes $[-n;n]^{d}$.
This conservativity requirement is essential to the argument, and
is more restrictive than one would like, since there are certainly
actions that are conservative but whose generators are not (consider
for example the $\mathbb{Z}^{2}$ action generated by translation
by $\sqrt{2}$ and $\sqrt{3}$ on $\mathbb{R}$. The action is conservative
but each element of the action is a nontrivial translation, so no
cyclic subgroup acts conservatively).

Our main result is an unconditional ratio theorem for multiparameter
actions:
\begin{thm}
\label{thm:main}Let $\{T^{u}\}_{u\in\mathbb{Z}^{d}}$ be a free,
non-singular ergodic action on a standard $\sigma$-finite measure
space. Let $\left\Vert \cdot\right\Vert $ be a norm on $\mathbb{R}^{n}$
and let $B_{n}=\{u\in\mathbb{Z}^{d}\,:\,\left\Vert u\right\Vert \leq n\}$.
Then for every $f,g\in L^{1}$ with $\int g\neq0$, we have \[
R_{n}(f,g)=\frac{\sum_{u\in B_{n}}\widehat{T}^{u}f}{\sum_{u\in B_{n}}\widehat{T}^{u}g}\xrightarrow[n\rightarrow\infty]{}\frac{\int f}{\int g}\]
almost everywhere.
\end{thm}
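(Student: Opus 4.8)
\emph{Overview of the approach.} The plan is to follow the standard template for pointwise ergodic theorems — a weak-type maximal inequality together with convergence on a dense class, combined through the Banach principle — but to obtain the maximal inequality, and the control of the oscillation $\limsup_n R_n - \liminf_n R_n$, by a transfer (Chacon--Ornstein type) argument that reduces it to a geometric statement about finite measures on $\mathbb{R}^d$.

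\emph{Reductions and transfer.} Splitting $f = f^+ - f^-$ and making a routine modification of $g$, one may assume $f \ge 0$ and $g > 0$ a.e. Since the action is free, I transport the problem to a single orbit, identified with $\mathbb{Z}^d$ and then thickened to $\mathbb{R}^d$: there $\widehat T^u$ is a translation composed with multiplication by a positive cocycle $w_u(\omega)$, so $\sum_{u \in B_n}\widehat T^u h(\omega)$ is the mass of the norm ball $B_n$ for the locally finite measure $\nu_\omega^{(h)}$ weighting $u$ by $w_u(\omega)\,h(T^{-u}\omega)$, and $R_n(f,g)(\omega)$ is the ratio of the $B_n$-masses of $\nu_\omega^{(f)}$ and $\nu_\omega^{(g)}$. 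The Chacon--Ornstein step then reduces the required probabilistic estimates to a maximal inequality for such ratios that holds \emph{uniformly over the measures that can arise} — a statement living entirely on $(\mathbb{R}^d, \|\cdot\|)$, concerning how much mass a finite measure can place on thin shells around spheres relative to the surrounding balls.

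\emph{The geometric core — the main obstacle.} Two things are needed on $\mathbb{R}^d$. First, a weak-type bound for the ratio maximal operator $\sup_r \nu(B(x,r))/\vol B(x,r)$; this follows from the Besicovitch covering theorem, which applies because every $B(x,r)$ is a homothet of the fixed symmetric convex body $\{\|\cdot\| \le 1\}$, with covering multiplicity $N = N(d,\|\cdot\|)$. Second, the boundary estimate: the measure of the set of centers $c$ for which $\nu$ places more than a $\lambda$-fraction of the mass of $B(c,r)$ on a shell $\{x : r \le \|x - c\| \le r + \delta\}$ — for $r$ ranging in a Besicovitch-selected family — is $o_\delta(1)$, uniformly in $\nu$. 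This is where $\dim \partial\{\|\cdot\| \le 1\} = d-1 < d$ enters: the $\delta$-neighborhood of the unit sphere has Lebesgue measure $O(\delta)$, and a covering-and-rescaling argument (again using Besicovitch) transfers the smallness from $\vol$ to $\nu$. Proving this second estimate uniformly for an arbitrary norm, with only convexity and box dimension available and no rotational symmetry, is the technical heart of the argument and the step I expect to be hardest; it is precisely the difficulty the abstract isolates, and the reason one cannot replace norm balls by arbitrary F\o lner shapes — witness the Brunel--Krengel example, where the shells retain a fixed proportion of the mass.

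\emph{Dense class and conclusion.} On the dense side, $R_n(g,g) \equiv 1$, and for $h \in L^\infty$ with $|h| \le Cg$, telescoping gives $\sum_{v \in B_n}\widehat T^v(h - \widehat T^u h) = \sum_{v \in B_n \triangle (B_n + u)} (\pm)\widehat T^v h$, whence $|R_n(h - \widehat T^u h,\, g)| \le C\,\nu_\omega^{(g)}(B_n \triangle (B_n + u))/\nu_\omega^{(g)}(B_n)$; since $B_n \triangle (B_n + u)$ lies in a shell of thickness $\|u\|$ about $\partial B_n$, of relative thickness $\|u\|/n \to 0$, the boundary estimate forces this ratio to $0$ a.e. The functions $g$ and $\{\, h - \widehat T^u h : |h| \le Cg,\ u \in \mathbb{Z}^d,\ C > 0 \,\}$ span a dense subspace of $L^1$: any $\varphi \in L^\infty$ annihilating all of them is $T$-invariant, hence constant by ergodicity, hence $0$ since $\int \varphi g = 0$ while $\int g \ne 0$; and on each generator $R_n$ converges, to $1$ for $g$ and to $0$ for the coboundary-type terms (which have integral $0$), i.e.\ to $\int f / \int g$ in every case. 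Feeding the weak-type maximal inequality from the third step and this dense convergence into the Banach principle yields $R_n(f,g) \to \int f / \int g$ almost everywhere for all $f, g \in L^1$ with $\int g \ne 0$, which is the theorem.
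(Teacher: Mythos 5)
Your overall architecture — transfer to $\mathbb{R}^d$ along orbits, convergence on a dense class spanned by $g$ and coboundaries, a ratio maximal inequality from Besicovitch, and the Banach principle — matches the paper's. You also correctly identify the geometric control of mass on thick shells as the heart of the matter. But the step you flag as hard is exactly where your sketch breaks down, and the mechanism you gesture at is not the one that works.

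The specific claim you lean on — ``the $\delta$-neighborhood of the unit sphere has Lebesgue measure $O(\delta)$, and a covering-and-rescaling argument transfers the smallness from $\vol$ to $\nu$'' — is a single-scale estimate, and it is false. Concretely, in $\mathbb{Z}$ take $\nu$ uniform on an arithmetic progression $\{0, r, 2r, \ldots, Nr\}$ with $N$ large. For every interior center $c=kr$ ($0<k<N$), the ball $B_r(c)$ has $\nu$-mass $3/(N+1)$ and the shell of thickness $1$ at radius $r$ captures $2/(N+1)$, i.e.\ a $\tfrac{2}{3}$-fraction; since $r$ can be made as large as you like, the relative thickness $\delta/r$ is arbitrarily small, yet the set of ``bad'' centers has $\nu$-measure tending to $1$. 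Thus there is no bound that is $o(\delta/r)$ uniformly in $\nu$ at a single scale, and no covering argument can transfer the Lebesgue shell bound to an arbitrary $\nu$, because the shell estimate has $\nu(B)$ (not $\vol(B)$) in the denominator — Besicovitch covering controls nothing of that form.

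What the paper actually proves, and what you are missing, is a genuinely \emph{multi-scale} statement (theorem \ref{thm:main-lemma}): if one has a stack of $q$ carpets over $F$ whose radii grow super-geometrically ($\minrad\mathcal{U}_i \geq (\maxrad\mathcal{U}_{i-1})^2$), and every ball in every carpet has an $\varepsilon$-thick boundary, then $\mu(F)\le\delta\mu(X)$ with $q = q(\varepsilon,\delta,k,\chi)$. The proof is an induction on a new invariant, the \emph{coarse dimension} of the metric space (definition \ref{def:coarse-dimension}), exploiting the fact that thick spheres in $\mathbb{R}^d$ have strictly smaller coarse dimension than $\mathbb{R}^d$ (corollary \ref{cor:Rd-coarse-dimension}), together with the sphere-disjointification lemma \ref{lem:sphere disjointification}. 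The AP example above is consistent with this: at a single scale you can have thick boundaries on most of the mass, but you cannot sustain this across many widely separated scales, because the spheres are lower-dimensional objects that cannot keep absorbing a fixed fraction of mass indefinitely. This inductive dimension argument is the paper's central new idea, and your proposal does not contain a substitute for it; the parenthetical ``for $r$ ranging in a Besicovitch-selected family'' is too vague to carry the multi-scale bookkeeping, and the $\vol\!\to\!\nu$ transfer you propose is not a viable route.

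A secondary, smaller issue: the maximal inequality you invoke is the weak $(1,1)$ bound for $\sup_r\nu(B(x,r))/\vol B(x,r)$, i.e.\ Hardy--Littlewood with Lebesgue in the denominator. After transfer, what one actually needs (and what the paper uses, from Becker and Lindenstrauss--Rudolph) is the \emph{ratio} maximal inequality of definition \ref{def:maximal-inequality}, for $\sup_n R_n(f,g)$ with $g\cdot d\mu$ in place of Lebesgue. These coincide only in the special case $g\equiv 1$; the paper handles general $g$ by first proving the $g\equiv 1$ case and then writing $R_n(f,g)=R_n(f,1)/R_n(g,1)$, which you should make explicit if you intend to follow the same reduction.
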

A similar result holds for $\mathbb{R}^{d}$-actions.

\subsection{The Chacon-Ornstein lemma and amenable measures on $\mathbb{R}^{d}$}

The method of proof follows a two-step argument that is by now standard
and goes back to Hopf. With $g$ fixed, one first proves that $R_{n}(f,g)$
converges for $f$ in some dense subset $\mathcal{F}\subseteq L^{1}$.
Then one applies a maximal inequality to go from $\mathcal{F}$ to
its closure (we shall discuss maximal inequalities in more detail
below). In Feldman's proof the conservativity assumption is used to
construct a special family functions which is dense and for which
the ratios converge. We shall instead work with the larger subspace
generated by $g$ and bounded co-boundaries: \[
\mathcal{F}=\mbox{span}\{g,h-\widehat{T}^{u}h\,:\, u\in\mathbb{Z}^{d}\,,\, h\in L^{1}\cap L^{\infty}\}\]
A standard argument shows that $\mathcal{F}$ is dense in $L^{1}$
(see e.g. \cite{F07,A97}). Since $R_{n}(g,g)\equiv1$, convergence
of $R_{n}(f,g)$ for all $f\in\mathcal{F}$ will follow once it is
established for co-boundaries $f=h-\widehat{T}^{u}h$. For such $f$
some cancellation occurs in the sum $\sum_{v\in B_{n}}\widehat{T}^{v}f$,
and some algebra (given in section \ref{sec:Proof-of-the-ratio-theorem},
or see \cite{B83}) reduces the problem to the following variant of
the Chacon-Ornstein lemma, into the proof of which goes most of the
hard work:
\begin{thm}
\label{thm:chacon-ornstein}Under the hypotheses of theorem \ref{thm:main},
for any $h\in L^{\infty}\cap L^{1}$, and for any $t>0$,\[
\frac{\sum_{u\in B_{n+t}\setminus B_{n-t}}\widehat{T}^{u}h}{\sum_{u\in B_{n}}\widehat{T}^{u}h}\rightarrow0\]
almost surely.
\end{thm}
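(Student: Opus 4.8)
The plan is to recast the statement as a purely geometric fact --- a bound on how much of its mass a locally finite measure on $\mathbb{R}^{d}$ can concentrate on thin shells around spheres at infinitely many scales --- and then to prove that fact from the Besicovitch covering property together with the fact that a sphere, being $(d-1)$-dimensional, is swept out by shells whose volume is of lower order than that of the ball they bound. First, two reductions. Splitting $h=h^{+}-h^{-}$ and using that $\widehat{T}^{u}$ is positive and integral-preserving, it suffices to treat $h\geq0$, $h\not\equiv0$, so that $Q_{n}(\omega):=\sum_{u\in B_{n}}\widehat{T}^{u}h(\omega)$ is nondecreasing in $n$; and since the action is free, ergodic and $\sigma$-finite, ergodicity forces it to be either totally dissipative or totally conservative. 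In the dissipative case ergodicity collapses the space to a single orbit --- $\mathbb{Z}^{d}$ with a weighted counting measure --- where $Q_{n}$ converges to a positive multiple of $\int h$ while $\sum_{u\in B_{n+t}\setminus B_{n-t}}\widehat{T}^{u}h\to0$, so the conclusion is immediate; hence assume the action is conservative, so that $Q_{n}(\omega)\nearrow\infty$ a.e.

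Next I would attach to each orbit a measure on $\mathbb{R}^{d}$. From $\widehat{T}^{u+v}=\widehat{T}^{u}\widehat{T}^{v}$ and $\widehat{T}^{v}\psi(\omega)=\psi(T^{-v}\omega)\,\tfrac{d(v\mu)}{d\mu}(\omega)$ one gets $\widehat{T}^{w}h(T^{-v}\omega)=\widehat{T}^{w+v}h(\omega)/\tfrac{d(v\mu)}{d\mu}(\omega)$, so for a.e.\ $\omega$ the weights $w\mapsto\widehat{T}^{w}h(\omega)$ define a locally finite measure $\lambda_{\omega}$ on $\mathbb{Z}^{d}\subseteq\mathbb{R}^{d}$, and $\lambda_{T^{-v}\omega}$ is a positive scalar multiple of the translate of $\lambda_{\omega}$ by $-v$. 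Hence $Q_{n}(\omega)=\lambda_{\omega}(B_{n})$, the quantity in the statement equals $\lambda_{\omega}(B_{n+t}\setminus B_{n-t})/\lambda_{\omega}(B_{n})$, and --- since moving the centre of a ball by a bounded amount only swells the relevant shell by a bounded amount --- the function
\[
G(\omega):=\sup_{t>0}\ \limsup_{n\to\infty}\ \frac{\lambda_{\omega}(B_{n+t}\setminus B_{n-t})}{\lambda_{\omega}(B_{n})}
\]
is $T$-invariant, hence a.e.\ equal to a constant $c\in[0,\infty]$ by ergodicity. It remains to show $c=0$.

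Suppose $c>0$. Then there are $t_{0},\epsilon_{0}>0$ and a set $A_{0}$ with $\mu(A_{0})>0$ such that for every $\omega\in A_{0}$, infinitely often $\lambda_{\omega}$ puts at least an $\epsilon_{0}$-fraction of $\lambda_{\omega}(B_{n})$ on the shell of width $2t_{0}$ about the sphere of radius $n$. Translating $A_{0}$ by $\mathbb{Z}^{d}$ identifies the orbit points of a given $\omega$ that lie in $A_{0}$ with the ``bad centres'' of the single measure $\lambda_{\omega}$, each bad centre carrying a witnessing ball on whose bounding shell $\lambda_{\omega}$ concentrates an $\epsilon_{0}$-fraction of the ball's mass. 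One then plays two estimates against each other: on one side $\int_{\Omega}\widehat{T}^{w}h\,d\mu=\int h\,d\mu$ for every $w$, hence $\int_{\Omega}\lambda_{\omega}(S)\,d\mu=\#S\cdot\int h$ for finite $S\subseteq\mathbb{Z}^{d}$, and a shell of width $2t_{0}$ and radius $n$ contains only $O(n^{d-1})$ lattice points (homogeneity of $\|\cdot\|$, and $\dim\partial B=d-1$) against $\#B_{n}\asymp n^{d}$; on the other side, the Besicovitch covering property thins the family of witnessing balls to one of bounded multiplicity still covering all the bad centres, so their shell masses may be summed with only a bounded loss. Forcing the lower-order volume of the shells to accommodate the mass they are required to carry then yields $c=0$, completing the reduction.

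The delicate point --- into which, as in Hopf's original argument and its descendants, essentially all the work goes --- is exactly this last balancing. A crude count does not suffice, because the bad scales of $\lambda_{\omega}$ may be extremely sparse, so that $\lambda_{\omega}$ resembles a sum of heavy shells at lacunary radii and grows in a staircase rather than a polynomial fashion; one must genuinely exploit the bounded-overlap structure supplied by Besicovitch, and carry out the comparison with respect to the infinite, non-invariant measure $\mu$ rather than through soft compactness. That this is precisely where the Besicovitch property is indispensable is consistent with the equivalence, noted in the introduction, between it and the maximal inequality.
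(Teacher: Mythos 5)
Your reduction and transference setup match the paper's in spirit: restrict to non-negative $h$, encode the orbit data as a measure $\lambda_{\omega}$ on $\mathbb{Z}^{d}$ with $\lambda_{\omega}(\{w\})=\widehat{T}^{w}h(\omega)$, note the equivariance $\lambda_{T^{-v}\omega}=$ (scalar)$\cdot(\lambda_{\omega}$ translated by $-v$), and aim to contradict positivity of the {}``bad'' set by integrating over $\Omega$. Two framing remarks before the substantive gap. First, the conservative/dissipative dichotomy you invoke is unnecessary and is something the paper deliberately avoids; as noted in the introduction, not having to make this distinction is precisely what lets the paper improve on Feldman's result, so reintroducing it would be a step backward (it is also not needed anywhere later in your argument). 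Second, instead of your ergodicity argument producing a constant $c$, the paper simply fixes $A_{\varepsilon}=\{\omega:\limsup_{n}s_{n}(\omega)>\varepsilon\}$ of positive measure and contradicts that; the two are interchangeable.

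The real issue is the {}``last balancing,'' which you correctly identify as where all the work goes but for which you supply no argument, and the mechanism you gesture at would not work. Counting lattice points ($O(n^{d-1})$ in a width-$2t_{0}$ shell against $\asymp n^{d}$ in the ball) is irrelevant here: $\lambda_{\omega}$ is an arbitrary locally finite measure on $\mathbb{Z}^{d}$ (it is the Radon--Nikodym--weighted orbit measure of a general non-singular action), so its shell mass need bear no relation to the shell's cardinality or Lebesgue volume. And the Besicovitch property alone only thins the witnessing balls to bounded multiplicity; it does not stop a single heavy annulus from carrying an $\varepsilon$-fraction of the mass of many nested balls at once. What is actually needed, and what the paper proves as Theorem~\ref{thm:main-lemma}, is a quantitative non-concentration statement: if $\mu$ is a finite measure on a metric space $X$, $F\subseteq X$, and one has a stack of $q=q(k,\chi,\varepsilon,\delta)$ carpets over $F$ (with radii growing at least quadratically between consecutive carpets) such that every ball $B$ in the stack satisfies $\mu(\partial_{1}B)\geq\varepsilon\mu(B)$, then $\mu(F)\leq\delta\mu(X)$. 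The proof is an induction on a new invariant, the coarse dimension $\cdim_{R_{0}}X$ (Definition~\ref{def:coarse-dimension}): one disjointifies a carpet into well-separated thick spheres, then applies the inductive hypothesis \emph{inside} each such sphere $S$ (which has coarse dimension $k-1$) to show that for most centers in $F\cap S$ there is a witnessing ball whose $1$-boundary mass mostly escapes $S$; summing these escapes over a Besicovitch-disjointified subfamily forces a fixed fraction of $\mu(X)$ into the annulus between successive thickenings of the spheres, and iterating this $O(1/\delta)$ times overflows $\mu(X)$. This inductive exploitation of the fact that a sphere is itself a metric space of strictly smaller coarse dimension is the novel ingredient of the paper; it is neither present in your plan nor derivable from the ingredients (Besicovitch plus a volume/count comparison) you name. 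Without it, your transference step merely restates the problem rather than solving it.
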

Note that for $d=1$ the numerator contains only two terms and it
suffices to show that the denominator tends to $\infty$; this follows
easily for conservative actions, while the non-conservative case can
be proved directly. On the other hand, for $d>1$ the number of terms
in the numerator is on the order of $n^{d-1}$, and when the measure
is infinite the denominator satisfies $\frac{1}{n^{d}}\sum_{u\in B_{n}}\widehat{T}^{u}h\rightarrow0$.
Thus a more sophisticated argument is necessary. 

Our proof of theorem \ref{thm:chacon-ornstein} applies the transference
principle to reduce theorem \ref{thm:chacon-ornstein} to a geometric
statement about the amount of mass which can concentrate on boundaries
of balls for finite measures in $\mathbb{R}^{d}$. In the proof we
use two facts related to the finite dimension of $\mathbb{R}^{d}$
(with combinatorial analogs in $\mathbb{Z}^{d}$). One is the Besicovitch
covering lemma, about which we shall have more to say below. The other
is (a variant of) the fact that the boundary of balls in $\mathbb{R}^{d}$
are manifolds of lower dimension, which is closely related to finite
topological dimension of $\mathbb{R}^{d}$. This property has apparently
not been exploited before in this context. It is also worth noting
that our method does not require us to distinguish between the conservative
and non-conservative case.

The same methods used in the proof of theorem \ref{thm:chacon-ornstein}
give the following theorem, which can be placed in geometric measure
theory and may be of independent interest. 
\begin{thm}
Let $\nu$ be a Borel probability measure on $\mathbb{R}^{d}$ and
let $B_{r}(x)$ be the ball of radius $r$ around $x\in\mathbb{R}^{d}$
with respect to some fixed norm. Then\[
\lim_{r\searrow0}\frac{\nu(\partial B_{r}(x))}{\nu(B_{r}(x))}=0\]
for $\nu$-almost every $x$.
\end{thm}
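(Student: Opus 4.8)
The plan is to fix a radius scale and count, using the Besicovitch covering lemma, how many disjoint "annular shells" of comparable thickness can be packed around a point. The key observation is that if $\nu(\partial B_r(x))/\nu(B_r(x))$ fails to go to $0$ on a set of positive measure, then at typical such $x$ there are infinitely many scales $r$ at which a fixed fraction $\varepsilon$ of the mass of $B_r(x)$ sits on the sphere $\partial B_r(x)$; by slightly fattening the sphere (replacing $\partial B_r(x)$ by the shell $B_{r(1+\delta)}(x)\setminus B_{r(1-\delta)}(x)$ for $\delta$ small, which is legitimate since $\nu$ has only countably many atoms and countably many "charged" sphere radii can be avoided by a perturbation argument), we get a fattened shell carrying mass $\geq\varepsilon/2$ inside a ball of mass $\nu(B_r(x))$.

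First I would set up a maximal/density function: for $\varepsilon>0$ let $E_\varepsilon$ be the set of $x$ for which $\limsup_{r\searrow 0}\nu(\partial B_r(x))/\nu(B_r(x))\geq\varepsilon$, and suppose for contradiction $\nu(E_\varepsilon)>0$. Restrict attention to a compact subset $K\subseteq E_\varepsilon$ with $\nu(K)>0$ and, using inner regularity and the absolute continuity argument above, pass to fattened shells. Next, for each $x\in K$ choose a radius $r(x)$, as small as we like, with a $\delta$-shell at $x$ carrying $\nu$-mass $\geq (\varepsilon/3)\nu(B_{r(x)}(x))$. Apply the Besicovitch covering lemma to the family $\{B_{r(x)}(x):x\in K\}$ to extract a subfamily covering (a large part of) $K$ with bounded overlap $C=C(\|\cdot\|,d)$. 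Summing the mass estimate over this subfamily and using the bounded overlap gives $\nu(K)\cdot(\varepsilon/3)/C \leq (\text{total mass of the shells, counted with multiplicity})\leq C\cdot\nu(\mathbb{R}^d)=C$ — which does not yet contradict anything, so a second idea is needed.

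The extra ingredient — the one that exploits that spheres are lower-dimensional — is a pigeonhole over \emph{many concentric scales}: at a point $x\in E_\varepsilon$ one can in fact find, for each $N$, radii $r_1>r_2>\dots>r_N$ with $r_{i+1}<(1-2\delta)r_i$ (so the corresponding $\delta$-shells at $x$ are pairwise disjoint) and each shell carrying mass $\geq(\varepsilon/3)\nu(B_{r_i}(x))\geq(\varepsilon/3)\nu(B_{r_N}(x))$. Hence $\nu(B_{r_1}(x))\geq N\cdot(\varepsilon/3)\nu(B_{r_N}(x))$, i.e. the mass ratio $\nu(B_{r_1}(x))/\nu(B_{r_N}(x))$ grows linearly in $N$ while the radius ratio $r_1/r_N$ is only forced to grow like $(1-2\delta)^{-N}$; choosing $\delta=\delta(N)\to 0$ appropriately keeps the radius ratio subexponential, and then a Besicovitch-covering argument at the \emph{smallest} scale $r_N$, combined with the fact that each small ball is contained in boundedly many of the dilated balls $B_{r_1}(x)$ (this is where topological dimension / the $(d-1)$-dimensionality of the sphere enters, bounding how many disjoint shells through a common point can be "filled in"), yields $\nu(K)\cdot N\cdot(\varepsilon/3)\leq C\cdot\nu(\mathbb{R}^d)=C$, contradicting $\nu(K)>0$ once $N$ is large. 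Letting $\varepsilon\to 0$ through a sequence finishes the proof.

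The main obstacle I expect is the last step: making precise the geometric claim that a point cannot lie in too many of the large balls $B_{r_i}(x_j)$ simultaneously when it is covered at the fine scale, i.e. transferring "the boundary of a ball is $(d-1)$-dimensional" into a genuine combinatorial overlap bound. This is exactly the delicate point the introduction flags as "apparently not exploited before," and handling it rigorously — rather than the routine Besicovitch bookkeeping and the countable-atom perturbation — is where the real work lies; I would expect it to be carried out by the same lemma used to prove Theorem~\ref{thm:chacon-ornstein}, applied here to the single measure $\nu$ on $\mathbb{R}^d$ rather than to a transferred measure.
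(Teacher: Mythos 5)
Your high-level plan is close to the paper's, and your closing guess is exactly right: the theorem is obtained by applying the main concentration lemma (the analogue of Theorem~\ref{thm:main-lemma}) directly to the single Borel measure $\nu$ on $\mathbb{R}^d$ rather than to a transferred measure. So you have correctly located the load-bearing tool. But the argument you sketch in place of that lemma does not close, and your diagnosis of why a lower-dimensionality input is needed is slightly off target.

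The concrete gap is in the last step. You produce, at each $x\in K$, nested disjoint $\delta$-shells at radii $r_1(x)>\dots>r_N(x)$ and conclude $\nu(B_{r_1(x)}(x))\geq N(\varepsilon/3)\nu(B_{r_N(x)}(x))$; then you Besicovitch-cover $K$ at the fine scale $r_N$ to get $\nu(K)\leq\sum_j\nu(B_{r_N(x_j)}(x_j))\leq\frac{3}{N\varepsilon}\sum_j\nu(B_{r_1(x_j)}(x_j))$. To reach $\nu(K)\leq C/N$ you need $\sum_j\nu(B_{r_1(x_j)}(x_j))\leq C$, i.e.\ bounded overlap of the \emph{dilated} balls $B_{r_1(x_j)}(x_j)$. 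Nothing in Besicovitch or doubling gives this for an arbitrary Borel measure: the Besicovitch multiplicity bound is a counting statement for the chosen balls $B_{r_N(x_j)}(x_j)$ and does not survive blowing up each radius by a factor $L=r_1/r_N$ (the counting multiplicity can grow like $L^d$, and even if it did not, $\nu(L B)$ is not controlled by $\nu(B)$ for general $\nu$). Taking $\delta=\delta(N)\to 0$ slows the growth of $L$ but does not make the required overlap bound true. And this is not, in fact, where the $(d-1)$-dimensionality of spheres enters: that dimensionality statement (Proposition~\ref{pro:Rd-lemma} in the paper, packaged as ``coarse dimension'') bounds how many \emph{spheres of comparable radius} can pass near a common point, not how many large dilated \emph{balls} can overlap. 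Also, your preliminary perturbation step ``avoiding countably many charged sphere radii'' is unnecessary and goes in the wrong direction: you want the sphere to be charged, and $\nu(\partial_t B_r(x))\geq\nu(\partial B_r(x))\geq\varepsilon\nu(B_r(x))$ holds automatically for every $t>0$, with no loss to $\varepsilon/3$.

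The paper's actual mechanism, inside Theorem~\ref{thm:main-lemma}, is structurally different from your mass-ratio pigeonhole. It first disjointifies a family of fat spheres $Y=\bigcup\partial_{2r}B$ covering much of $F$, then runs an induction on coarse dimension: for each fat sphere $S$, the induction hypothesis (applied to the restricted measure $\nu|_S$ and the sub-stack of small balls centered in $F\cap S$) produces, for a definite fraction of points of $F\cap S$, a ball $B'$ whose thin boundary carries an $\varepsilon$-fraction of mass globally but less than $\varepsilon/2$ of its mass inside $S$ — so a definite fraction of that boundary mass lies just \emph{outside} $S$. Summing over a disjointified choice of such $B'$, one accumulates a fixed fraction of $\nu(X)$ in the annulus $Y_{j+1}\setminus Y_j$; iterating over $j$ eventually exceeds $\nu(X)$, the contradiction. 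The growth you are pigeonholing on never appears; the contradiction comes from accumulating escaping mass, not from comparing $\nu(B_{r_1})$ to $\nu(B_{r_N})$. If you want to fill your gap you should replace your final paragraph by an invocation of Theorem~\ref{thm:main-lemma} (after rescaling the norm so the smallest radii exceed $\max\{2,R_0\}$ and the thin shells become $\partial_1 B$), using a stack whose radii $r_1(x)<\dots<r_q(x)$ are confined to prechosen bands $[\rho_i^-,\rho_i^+]$ with $\rho_i^-\geq(\rho_{i-1}^+)^2$; these bands exist on a large-measure subset of $E_\varepsilon$ by a diagonal/Egorov-type selection.
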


\subsection{The maximal inequality}

As we have mentioned already, in order to derive the ratio theorem
from theorem \ref{thm:chacon-ornstein} one uses the maximal inequality,
which is the second subject of this paper. We shall denote by $B_{n}$
an increasing sequence of finite subsets of $G$ which satisfy $1_{G}\in B_{n}$
for all $n$.
\begin{defn}
\label{def:maximal-inequality}An ergodic action of $G$ admits a
ratio maximal inequality (with respect to $(B_{n})$) if, for every
$0\leq g\in L^{1}$ there is a constant $M$ such that, for every
$0\leq f\in L^{1}$ and $\varepsilon>0$, \[
\mu_{g}\{\omega\in\Omega\,:\,\sup_{n}R_{n}(f,g)>\varepsilon\}\leq\frac{M}{\varepsilon}\int fd\mu\]
where $d\mu_{g}=g\cdot d\mu$. We say that there is a maximal inequality
for $G$ (with respect to $(B_{n})$) if every action admits a maximal
inequality.
\end{defn}
Notice that when $\mu(\Omega)=1$, we can take $g\equiv1$. Then $\mu_{g}=\mu$
and $R_{n}(f,g)$ are the ergodic averages of $f$, so the ratio maximal
inequality reduces to the usual maximal inequality. Note also that
we allow the constant $M$ to depend on $g$, since this is what is
used in the proof of the ergodic theorem.

The ordinary maximal inequality for probability-preserving actions
of amenable groups is known to hold quite generally \cite{OW87},
but this is not so in the non-singular case. Indeed, if the Krengel-Brunel
counter-example is examined closely it is evident that there is a
dense class of functions $f$ for which the ratio theorem holds. The
problem must be that the maximal inequality fails. This is closely
related to the fact that the sum is over one-sided cubes $Q_{n}=[1;n]^{d}$,
which fail to satisfy the Besicovitch covering property:
\begin{defn}
\label{def:besicovitch}A sequence $(B_{n})$ of subsets of $G$ satisfies
the Besicovitch covering property with constant $C$ if the following
holds. If $E\subseteq G$ is finite, and for each $g\in E$ we are
given a translate $B_{n(g)}g$ of one of the $B_{n}$'s, then there
is a subfamily of these translates which covers $E$ and such that
no point in $G$ is covered more than $C$ times.
\end{defn}
This geometric property has found many applications in analysis; an
excellent source on this is \cite{G75}. That it implies the ratio
maximal inequality was first shown by M. Becker \cite{B83} for balls
$B_{n}\subseteq\mathbb{R}^{d}$ in a given norm. A maximal inequality
relying on the Besicovitch property was later also established by
E. Lindenstrauss and D. Rudolph for a more general class non-singular
group actions \cite{LR06}. A short proof of the general case can
be found in Feldman's paper \cite{F07}. Other applications of the
Besicovitch property to ergodic theory appear in \cite{H06}.

It is thus known that the Besicovitch property implies the ratio maximal
inequality. It has apparently not been observed before that it is
also necessary.
\begin{thm}
\label{thm:maximal-characterization}Let $G$ be a countable group
and $B_{r}\subseteq G$ an increasing sequence of symmetric sets with
$\cap B_{r}=\{e\}$. Then there is a ratio maximal inequality for
$G$ if and only if $B_{n}$ satisfies the Besicovitch property.
\end{thm}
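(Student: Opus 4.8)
I would treat the two implications separately. The direction ``Besicovitch $\Rightarrow$ maximal inequality'' is essentially known and I would just cite it: for an action, $0\le g\in L^{1}$, $0\le f\in L^{1}$, $\varepsilon>0$, one picks for each $\omega$ in the superlevel set a radius $n(\omega)$ with $\sum_{u\in B_{n(\omega)}}\widehat{T}^{u}f(\omega)>\varepsilon\sum_{u\in B_{n(\omega)}}\widehat{T}^{u}g(\omega)$, transfers this (à la Chacon--Ornstein) to a covering of a finite piece of a typical orbit by translated balls, extracts a sub-cover of multiplicity $\le C$ by the Besicovitch property, and sums the chosen inequalities, obtaining the bound with $M=C$ (see \cite{B83,F07,LR06}). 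The content is the converse, which I would prove in the form: \emph{if $(B_{n})$ fails the Besicovitch property, then some action has no ratio maximal inequality}. Since ``$G$ has a maximal inequality'' means \emph{every} action has one, it suffices to exhibit a single offending action, and I would use the simplest one --- the left translation action of $G$ on $(G,\text{counting measure})$, which is free, measure-preserving and ergodic (it is transitive). With the conventions of the introduction one has, for this action, $R_{n}(f,g)(x)=\big(\sum_{v\in B_{n}x}f(v)\big)\big/\big(\sum_{v\in B_{n}x}g(v)\big)$ and $\mu_{g}(\{x\})=g(x)$, so its ratio maximal inequality is literally a combinatorial maximal inequality on $G$ over the translated balls $B_{n}x$; in particular the Radon--Nikodym cocycle is never needed, the measure-preserving case already carries the extremal behaviour.

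The combinatorial heart is an extraction lemma: \emph{if $(B_{n})$ fails the Besicovitch property, then for every $N$ there are $c_{1},\dots,c_{N}\in G$, radii $n_{1},\dots,n_{N}$, and a point $z$ with $z\in B_{n_{i}}c_{i}$ for all $i$ and $c_{j}\notin B_{n_{i}}c_{i}$ for all $i\ne j$.} I would prove this by feeding a bad configuration to the greedy Besicovitch selection. Fix a finite $E\subseteq G$ with designated balls $\beta_{c}=B_{n(c)}c$ $(c\in E)$ such that no sub-cover of $E$ has multiplicity $\le N$. Enumerate $E$ with $n(c)$ non-increasing and, running through in that order, select $\beta_{c}$ whenever $c$ is not yet covered by the already selected balls. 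The selected family $S$ covers $E$, so by hypothesis some point $z$ lies in at least $N+1$ of its balls, say $\beta_{c_{1}},\dots,\beta_{c_{N+1}}$ in order of selection. For $j<l$, the centre $c_{l}$ was uncovered when $\beta_{c_{l}}$ was selected, so $c_{l}\notin\beta_{c_{j}}$, i.e.\ $c_{l}c_{j}^{-1}\notin B_{n(c_{j})}$; since $n(c_{j})\ge n(c_{l})$ and the $B_{n}$ are symmetric, also $c_{j}c_{l}^{-1}=(c_{l}c_{j}^{-1})^{-1}\notin B_{n(c_{l})}$, i.e.\ $c_{j}\notin\beta_{c_{l}}$. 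Thus $c_{1},\dots,c_{N+1}$ are pairwise exterior to one another's balls, while $z$ lies in all of them.

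To finish I would splice these configurations into one counterexample. Assume $G$ is infinite (if $G$ is finite both sides of the equivalence hold trivially). For each $N$ take the configuration above (centres $c_{i}^{(N)}$, radii $n_{i}^{(N)}$, common point $z^{(N)}$) and translate it on the right by an element $t_{N}$ chosen inductively so that the finite sets $F_{N}=\bigcup_{i}B_{n_{i}^{(N)}}(c_{i}^{(N)}t_{N})$ are pairwise disjoint. Put $E_{N}=\{c_{i}^{(N)}t_{N}\}_{i}$, fix $\alpha_{N}>0$ with $\sum_{N}\alpha_{N}|E_{N}|<\infty$, and set $g=\sum_{N}\alpha_{N}\mathbf{1}_{E_{N}}\in L^{1}$. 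For $f_{N}=2\alpha_{N}\delta_{z^{(N)}t_{N}}$ (so $\int f_{N}=2\alpha_{N}$) and a translated centre $c=c_{i}^{(N)}t_{N}$ with its radius $m=n_{i}^{(N)}$, the numerator $\sum_{v\in B_{m}c}f_{N}(v)$ equals $2\alpha_{N}$ because $z^{(N)}t_{N}\in B_{m}c$, while $\sum_{v\in B_{m}c}g(v)=\alpha_{N}$: disjointness of the $F_{N}$ keeps all other $g$-mass out of $B_{m}c\subseteq F_{N}$, and the $c_{j}^{(N)}t_{N}$ being pairwise exterior leaves only the mass at $c$ itself. Hence $\sup_{n}R_{n}(f_{N},g)\ge 2>1$ on all of $E_{N}$, so $\mu_{g}\{\sup_{n}R_{n}(f_{N},g)>1\}\ge\alpha_{N}|E_{N}|=\alpha_{N}(N+1)=\tfrac{N+1}{2}\int f_{N}$. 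As $N$ is arbitrary, no constant $M$ works for this one $g$ (take $\varepsilon=1$), so the translation action has no ratio maximal inequality, hence $G$ has none; together with the easy direction this proves the equivalence.

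The step I expect to be the main obstacle is the extraction lemma: it is the only place where failure of the covering property is genuinely used, and the idea of reading a high-multiplicity point of a greedy sub-cover as a ``sunflower'' of balls through a common point with pairwise-exterior centres is exactly what makes the plain measure-preserving translation action suffice, so that no delicate non-singular or cut-and-stack construction is required.
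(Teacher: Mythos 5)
Your proof is correct, and it reaches the conclusion by a route that differs from the paper's in two places. First, to extract a combinatorial failure you use the incremental-sequence form of the Besicovitch property (the ``sunflower'' of balls through a common point with pairwise-exterior centres, which is in substance condition~(\ref{enu:Bes-incremental}) of Proposition~\ref{pro:Besicovitch-characterizations}), whereas the paper passes through the frequency form (condition~(\ref{enu:Bes-low-freq})/(\ref{enu:Bes-high-freq})) to produce two indicator functions $1_U,1_V$ with $|U|/|V|$ small but all sampled frequencies large. Both are legitimate negations of the covering property; yours isolates a cleaner extremal configuration, at the cost of needing the observation that one common witness point already forces the bad ratio (your $f_N$ is a single Dirac mass, not an indicator of a large set). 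Second, to upgrade ``for each $M$ there exist $f,h$ with $C(f,h)>M$'' to ``there is a fixed $h$ that fails for all $M$,'' you splice the configurations at disjoint right-translates inside $G$ itself with summable weights $\alpha_N$; the paper instead proves the stronger remark that \emph{every} free (non-atomic, measure-preserving) action fails, via a Rokhlin-tower merging argument, and explicitly omits the atomic case as ``simpler.'' The translation action on $(G,\text{counting})$ is atomic, so your construction is essentially a complete write-up of precisely the case the paper leaves to the reader, and it suffices for the theorem as stated because ``a maximal inequality for $G$'' is defined to require every action to admit one. Your argument proves exactly the equivalence and is self-contained; the paper's proves more but does not write out the case you treat. (Minor points: the extraction lemma as you state it asks for $c_1,\dots,c_N$ but your proof and its application use $N+1$ centres --- an immaterial off-by-one; and the reduction to infinite $G$ is fine, since for finite $G$ both the Besicovitch property with constant $|G|$ and the maximal inequality hold trivially.)
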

Actually, more is true: if $B_{n}$ is not Besicovitch then the ratio
maximal inequality fails for every free action of the group. Contrast
this with the usual maximal inequality, which holds for any measure-preserving
action of an amenable group on a probability space, as long as the
averages are taken over a tempered F\o{}lner sequence \cite{L01,W03}. 

One should note that the Besicovitch property is rather rare. It fails,
for example, for the Heisenberg group when $B_{n}$ are balls with
respect to several natural metrics \cite{R04}. 

It is not clear what all this says about the ratio ergodic theorem.
For probability-preserving actions of amenable groups the ratio theorem
along tempered F\o{}lner sequences follows from the ordinary ergodic
theorem. At the same time, the ratio maximal inequality fails, as
we saw before. The ratio ergodic theorem does hold, for trivial reasons,
for dissipative actions (e.g. on atomic measure spaces). This leaves
the hope that a ratio theorem may persist in a more general setting.

The rest of this paper is organized as follows. In the next section
we discuss the Besicovitch property and prove theorem \ref{thm:maximal-characterization}.
In section \ref{sec:The-doubling-property} we discuss some covering
and disjointification lemmas. In section \ref{sec:Non-Concentration-of-mass}
we define coarse dimension and prove our main tool about concentration
of measures on ball boundaries. In section \ref{sec:Proof-of-the-ratio-theorem}
we complete the proofs of theorems \ref{thm:chacon-ornstein} and
the ratio theorem, \ref{thm:main}.
\begin{acknowledgement*}
I would like to thank Benjamin Weiss and Elon Lindenstrauss for introducing
me to this problem and for their useful comments. Thanks also to J.
Bourgain for his comments.
\end{acknowledgement*}

\section{\label{sec:Besicovitch}The Besicovitch lemma and the Maximal inequality}

In this section we prove theorem \ref{thm:maximal-characterization}.
We shall reformulate the Besicovitch covering property for metric
spaces and present it in several equivalent forms. An excellent source
on these matters is \cite{G75}.

Given a metric space $(X,d)$ we denote by $B_{r}(x)$ the open ball
of radius $r$ centered at $x$. We think of balls as carrying with
them the information about their center and radius, which are not
in general determined by the ball as a set. 

A finite family of balls $\mathcal{U}=\{B_{r(i)}(x_{i})\,:\,1\leq i\leq N\}$
is called a \emph{carpet} over $\{x_{1},\ldots,x_{N}\}$. It is sometimes
convenient to regard carpets as ordered sets. Note that the statement
that $\mathcal{U}$ is a carpet over $E$ is stronger than the statement
that it covers $E$, since the former asserts that each $x\in E$
is the center of some ball in $\mathcal{U}$, whereas the latter only
says that $x$ belongs to some ball.

We say that a collection of sets has \emph{multiplicity} $\leq m$
if every point is contained in at most $m$ elements of the collection. 

A metric space satisfies the \emph{Besicovitch property} with constant
$C$ \cite{B45,G75} if for any carpet over $E$ there exists a sub-carpet
which covers $E$ and has multiplicity $\leq C$. The main example
for this is $\mathbb{R}^{d}$ with a norm-induced metric; this was
shown by Morse \cite{M47}. A more accessible proof can be found in
\cite{G75} or can be deduced from proposition \ref{pro:Rd-lemma}
below.

This definition of the Besicovitch property is consistent with the
one in the introduction if $X=G$ is a group, $d$ is a right-invariant
metric on $G$ and $B_{n}$ are the balls of radius $n$ around the
group's identity element. We shall allow ourselves to switch freely
between these two formalisms, which are notationally identical.

We say that a sequence $B_{r(i)}(x_{i})$ is \emph{incremental }if
$r(i)$ is decreasing and $x_{i}\notin\cup_{j<i}B_{r(j)}(x_{i})$. 

The Besicovitch property has several equivalent forms which are useful
in applications.
\begin{prop}
\label{pro:Besicovitch-characterizations}Let $X$ be a metric space
and $C\in\mathbb{N}$. The following are equivalent:
\begin{enumerate}
\item \label{enu:Besicovitch}$X$ has the Besicovitch property with constant
$C$.
\item \label{enu:Bes-low-freq}For any carpet $\mathcal{U}$ over $E$ and
$A,B\subseteq E$, if $t>0$ and $|A\cap F|/|B\cap F|<t$ for $F\in\mathcal{U}$,
then $|A|/|B|<Ct$.
\item \label{enu:Bes-high-freq}For any carpet $\mathcal{U}$ over $E$
and $A,B\subseteq E$, if $t>0$ and $|A\cap F|/|B\cap F|>t$, for
$F\in\mathcal{U}$, then $|A|/|B|>\frac{1}{C}t$.
\item \label{enu:Bes-incremental}Every incremental sequence has multiplicity
$\leq C$.
\item \label{enu:Bes-incremental-cover}For any carpet $\mathcal{U}$ over
$E$ there is an incremental sequence of sets from $\mathcal{U}$
covering $E$ and with multiplicity $\leq C$
\end{enumerate}
\end{prop}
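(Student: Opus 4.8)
The plan is to close the cycle $(1)\Rightarrow(2)\Rightarrow(3)\Rightarrow(4)\Rightarrow(5)\Rightarrow(1)$, after noting that $(2)$ and $(3)$ are a single statement in disguise: running $(2)$ with $A,B,t$ replaced by $B,A,1/t$ converts its hypothesis $|B\cap F|<\tfrac1t|A\cap F|$ (i.e. $|A\cap F|>t|B\cap F|$) and its conclusion $|B|<\tfrac Ct|A|$ (i.e. $|A|>\tfrac tC|B|$) into exactly the hypothesis and conclusion of $(3)$, and conversely. So it suffices to establish $(1)\Rightarrow(2)$, $(3)\Rightarrow(4)$, $(4)\Rightarrow(5)$, and $(5)\Rightarrow(1)$.

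Three of these are routine. For $(5)\Rightarrow(1)$ there is nothing to prove: the incremental sub-family produced by $(5)$ is already a sub-carpet of $\mathcal U$ covering $E$ with multiplicity $\le C$. For $(4)\Rightarrow(5)$ I would use the greedy construction — order the balls of the carpet so the radii are non-increasing and pass through them, retaining a ball exactly when its center has not yet been covered by a retained ball; the retained balls form an incremental sequence (radii non-increasing, each retained center lying outside the earlier retained balls) and they cover $E$ (when the ball centered at a given $x\in E$ is processed, $x$ is either already covered or that ball is retained), so $(4)$ bounds its multiplicity by $C$. For $(1)\Rightarrow(2)$: given $|A\cap F|<t|B\cap F|$ for every $F\in\mathcal U$ and a sub-carpet $\mathcal U'\subseteq\mathcal U$ covering $E$ with multiplicity $\le C$,
\[
|A|\;\le\;\sum_{F\in\mathcal U'}|A\cap F|\;<\;t\sum_{F\in\mathcal U'}|B\cap F|\;\le\;Ct\,|B|,
\]
the first inequality because $\mathcal U'$ covers $A\subseteq E$ and the last because each point of $B$ lies in at most $C$ members of $\mathcal U'$.

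The real work is in $(3)\Rightarrow(4)$, via a shrinking argument. Let $(B_{r(i)}(x_i))_{i=1}^{N}$ be incremental, fix a point $y_0$, and set $S=\{i:y_0\in B_{r(i)}(x_i)\}$, $m=|S|$; I must show $m\le C$, and may assume $m\ge 2$. Incrementality gives two facts: $d(y_0,x_i)<r(i)$ for $i\in S$, and $d(x_i,x_j)\ge r(i)$ for $i\ne j$ in $S$ (one of $x_i\in B_{r(j)}(x_j)$, $x_j\in B_{r(i)}(x_i)$ is forbidden, and the radii are non-increasing). In particular, if $y_0$ were some $x_{i_0}$ the second fact would give $m=1$, so we may take $y_0\notin\{x_i:i\in S\}$; and $\rho_i:=\min_{j\in S\setminus\{i\}}d(x_i,x_j)$ satisfies $d(y_0,x_i)<r(i)\le\rho_i$, so the shrunk ball $F_i:=B_{\rho_i}(x_i)$ still contains $y_0$ but contains no $x_j$ with $j\in S\setminus\{i\}$. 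Now form the carpet $\mathcal U=\{B_\sigma(y_0)\}\cup\{F_i:i\in S\}$ over $E=\{y_0\}\cup\{x_i:i\in S\}$, with $\sigma>0$ small enough that $B_\sigma(y_0)$ omits every $x_i$, and apply $(3)$ with $A=\{y_0\}$, $B=\{x_i:i\in S\}$ and an arbitrary $t<1$: for $F=F_i$ one has $|A\cap F|=1>t=t|B\cap F|$, and for $F=B_\sigma(y_0)$ one has $|A\cap F|=1>0=t|B\cap F|$, so $(3)$ yields $1=|A|>\tfrac tC|B|=\tfrac{tm}{C}$ for every $t<1$, whence $m\le C$. (Incidentally, the same family $\{F_i:i\in S\}$ over $\{x_i:i\in S\}$, whose only covering sub-carpet is the whole family and which has multiplicity $m$ at $y_0$, yields $(1)\Rightarrow(4)$ directly.)

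The main obstacle is the observation powering the shrinking step: in an incremental sequence distinct centers are automatically separated by at least the larger of their two radii, which is precisely what lets each ball be shrunk down to isolate its own center without losing the common point $y_0$. Once that separation estimate is in hand the rest is bookkeeping, the degenerate cases ($E$ empty, repeated centers, $y_0$ a center) being immediate.
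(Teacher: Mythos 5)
Your proof is correct and follows essentially the same cycle as the paper's ($(1)\Rightarrow(2)\Leftrightarrow(3)$, then $\Rightarrow(4)\Rightarrow(5)\Rightarrow(1)$), with the same auxiliary carpet (the incremental balls containing $y_0$ together with a tiny ball around $y_0$) and the same key observation that in an incremental family each ball $B_{r(i)}(x_i)$ isolates its own center. The only cosmetic quibble is that your $\rho_i=\min_{j\neq i}d(x_i,x_j)\geq r(i)$, so the ``shrunk'' balls $F_i=B_{\rho_i}(x_i)$ actually contain $B_{r(i)}(x_i)$ rather than being shrunk; the original balls already do the job, as the paper uses them directly.
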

\begin{proof}
(\ref{enu:Besicovitch}) implies (\ref{enu:Bes-low-freq}): Using
(\ref{enu:Besicovitch}) we may pass to a sub-collection $\{F_{i}\}_{i\in I}\subseteq\mathcal{U}$
with multiplicity $\leq C$, and which covers $E$, and hence covers
$A$ and $B$. It now follows that \[
|A|\leq\sum_{i}|A\cap F_{i}|<t\sum|B\cap F_{i}|\leq Ct|\cup_{i}(B\cap F_{i})|=Ct|B|\]

(\ref{enu:Bes-low-freq}) and (\ref{enu:Bes-high-freq}) are equivalent
on reversing the roles of $A$ and $B$.

(\ref{enu:Bes-low-freq}) implies (\ref{enu:Bes-incremental}): Let
$B_{r(1)}(x_{1}),\ldots,B_{r(N)}(x_{N})$ be an incremental sequence.
For each $i$, note that if $j<i$ then $x_{i}\notin B_{r(j)}(x_{j})$
because the sequence is incremental, and therefore $x_{i}\notin B_{r(j)}(x_{j})$.
Suppose $y\in\cap B_{r(i)}(x_{i})$. Choose $\varepsilon>0$ so that
$x_{i}\notin B_{\varepsilon}(y)$; setting $A=\{x_{1},\ldots,x_{N}\}$
and $B=\{y\}$ we see that for any $t>1$ the hypothesis of (2) is
satisfied with respect to the carpet $\{B_{\varepsilon}(y)\}\cup\{B_{r(i)}(x_{i})\,:\,1\leq i\leq N\}$
over $\{x_{1},\ldots,x_{N},y\}$, implying that $N=|A|/|B|<Ct$ for
every $t>1$, and the claim follows.

(\ref{enu:Bes-incremental}) implies (\ref{enu:Bes-incremental-cover}):
Let $\mathcal{U=\{}B_{r(1)}(x_{1}),B_{r(2)}(x_{2}),\ldots,B_{r(N)}(x_{N})\}$
be a carpet over $E=\{x_{1},\ldots,x_{N}\}$. Without loss of generality
we may assume that $r(1)\geq r(2)\geq\ldots\geq r(N)$. Iterate over
$i$ from $1$ to $N$ and at each stage select or discard the set
$B_{r(i)}(x_{i})$ according to whether $x_{i}$ belongs to the union
of the sets selected previously or not. We obtain an incremental sequence
which covers $E$, and by (\ref{enu:Bes-incremental}) has multiplicity
$\leq C$.

The implication (\ref{enu:Bes-incremental-cover})$\Rightarrow$(\ref{enu:Besicovitch})
is trivial.
\end{proof}
We can now prove theorem \ref{thm:maximal-characterization}:
\begin{thm*}
Let $G$ be a countable group and $B_{r}\subseteq G$ an increasing
sequence of symmetric sets with $\cap B_{r}=\{e\}$. Then $G$ satisfies
a maximal inequality for sums over $B_{r}$ if and only if $X$ satisfy
the Besicovitch property with respect to $B_{r}$.\end{thm*}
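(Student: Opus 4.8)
The plan is to prove the two implications separately, with the easy direction first. For the direction ``Besicovitch $\Rightarrow$ maximal inequality,'' I would follow the now-standard covering argument: fix $0\le g\in L^1$ and $0\le f\in L^1$, and for $\omega$ in the superlevel set $\{\sup_n R_n(f,g)>\varepsilon\}$ pick $n(\omega)$ with $\sum_{u\in B_{n(\omega)}}\widehat T^u f > \varepsilon\sum_{u\in B_{n(\omega)}}\widehat T^u g$ at $\omega$. Transfer this to a combinatorial statement on an orbit: using freeness, identify a generic orbit with $G$ itself, so that the balls $B_{n(\omega)}$ become right-translates $B_{n(\omega)}h$ as $h$ ranges over the orbit; the non-singular weights $\frac{d(g\mu)}{d\mu}$ and the densities of $f,g$ become a pair of nonnegative functions $\widetilde f,\widetilde g$ on $G$. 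Then invoke characterization (\ref{enu:Bes-incremental-cover}) of Proposition \ref{pro:Besicovitch-characterizations} to extract an incremental (hence multiplicity-$\le C$) subfamily of these balls covering the relevant finite portion of the orbit, and sum the local inequalities $\sum_{B}\widetilde f>\varepsilon\sum_{B}\widetilde g$ over the subfamily; the multiplicity bound turns this into $\|\widetilde f\|_1 > \frac{\varepsilon}{C}\cdot(\text{mass of }\widetilde g\text{ on the covered set})$, which upon integrating back over $\Omega$ (a Fubini/transference step) gives exactly $\mu_g\{\sup_n R_n(f,g)>\varepsilon\}\le \frac{C}{\varepsilon}\int f\,d\mu$, i.e. $M=C$ works. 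This is essentially Becker's argument \cite{B83, F07} and presents no real obstacle.

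The substantive direction is the converse: if $(B_r)$ fails the Besicovitch property, construct a (free) action for which the ratio maximal inequality fails. Here I would work directly on $G$ with counting measure: let $\Omega=G$, let $G$ act on itself by right translation $T^u h = hu^{-1}$ (so the action is free and the counting measure is invariant), and note that in this case $\widehat T^u$ is just the Koopman operator and $R_n(f,g)$ at the point $h$ is the ratio $\big(\sum_{u\in B_n} f(hu^{-1})\big)\big/\big(\sum_{u\in B_n} g(hu^{-1})\big)$ of sums of $f,g$ over the translate $B_n h$. So I need: for every $M$, a choice of $0\le g\in\ell^1(G)$ with $\sum g=1$, of $0\le f\in\ell^1(G)$, and of $\varepsilon>0$, such that $\sum_{h\in G} g(h)\cdot\mathbf 1\{\sup_n R_n(f,g)(h)>\varepsilon\}$ exceeds $\frac{M}{\varepsilon}\sum f$. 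Failure of Besicovitch, via characterization (\ref{enu:Bes-incremental}) (or directly), supplies, for each $C$, a finite incremental sequence $B_{n(1)}g_1,\dots,B_{n(N)}g_N$ of translates of multiplicity $> C$ — i.e. there is a point $h_0$ lying in more than $C$ of them — together with the centers $g_1,\dots,g_N$, which are distinct and each uncovered by the earlier balls. The idea is to put $f$ to be a point mass (or small bump) at the heavily-covered point $h_0$, and let $g$ be spread over the centers $g_1,\dots,g_N$ so that every center sees, inside its own ball, total $g$-mass comparable to $1/N$ while $g$-mass $\approx 1/N$ and $f$-mass $=1$ at $h_0$ gives $R_{n(i)}(f,g)(g_i)\gtrsim 1$ at each center; then the superlevel set at level $\varepsilon\asymp 1$ contains all $N$ centers, carrying $g$-mass $\approx 1$, while $\int f = 1$, so the ratio $\mu_g\{\dots\}/\int f \asymp 1$ regardless of the (large) value of $N$ — and pushing $N$, equivalently $C$, to infinity forces $M\to\infty$. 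The delicate point is the bookkeeping: because the centers are only guaranteed \emph{not} to lie in strictly-earlier balls, one must use the incremental structure (radii decreasing) to ensure that when $f$ is a bump at $h_0$, the ball $B_{n(i)}g_i$ around $g_i$ actually contains $h_0$ — this is where symmetry of the $B_r$ enters, since $h_0\in B_{n(i)}g_i$ iff $g_i\in B_{n(i)}h_0$ (wait: one needs $h_0 g_i^{-1}\in B_{n(i)}$, and symmetry gives $g_i h_0^{-1}\in B_{n(i)}$, i.e. the relation is symmetric in the pair) — and to get the denominator small one may instead spread $g$ over \emph{translates} of the centers rather than the centers themselves, or iterate the construction to amplify the multiplicity.

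The main obstacle, then, is the second direction's construction: turning a purely combinatorial multiplicity-$>C$ failure into an honest $\ell^1$ counterexample with controlled $\int f$ and lower-bounded $\mu_g$ of the superlevel set, uniformly as $C\to\infty$. I expect the cleanest route is to normalize so that each of the $N$ balls in the bad incremental family has comparable cardinality (replacing $B_r$ by a cofinal subsequence if needed, using $\cap B_r=\{e\}$ to control overlaps), set $g$ to the normalized counting measure on $\bigcup_i B_{n(i)}g_i$ so that each ball carries $g$-mass $\ge c/N$, and choose $f = N\cdot\mathbf 1_{\{h_0\}}$ so that $\int f = N$ but $R_{n(i)}(f,g)(g_i)\ge (N)\big/(\text{ball card})\cdot(\text{something})$ — then a short computation shows the superlevel set at a fixed $\varepsilon$ has $\mu_g$-measure bounded below independent of $N$ while $\frac{M}{\varepsilon}\int f$ grows, which is the desired contradiction once $M$ is held fixed. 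Getting these constants to line up — in particular making $\int f$ genuinely small relative to the captured $g$-mass — is the one place where care is needed, and I would handle it by the standard trick of taking $f$ to be a probability measure at $h_0$ and instead forcing the \emph{number} of disjoint capturing events to blow up, so that the maximal-inequality constant $M$ would have to exceed the multiplicity $C$, which is unbounded by hypothesis.
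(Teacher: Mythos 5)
Your treatment of the converse direction (failure of Besicovitch implies failure of the maximal inequality) has a genuine gap, and it is precisely the part into which the paper puts most of its effort. Your construction produces, for each $M$, a pair $(f,g)$ --- both depending on $M$ --- with $\mu_g\{\sup_n R_n(f,g)>\varepsilon\}>\frac{M}{\varepsilon}\int f$. But in Definition \ref{def:maximal-inequality} the quantifier order is ``for every $g$ there is an $M$ such that for all $f\ldots$''; the constant is permitted to depend on $g$. To negate the maximal inequality you must exhibit a \emph{single} $g$ with $\sup_f C(f,g)=\infty$, where $C(f,g)=\mu_g\{\sup_n R_n(f,g)>\varepsilon\}/\int f\,d\mu$. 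Producing a bad pair for each $M$ separately only refutes a constant uniform in $g$, which is not the statement. The paper explicitly flags this distinction and then carries out a further iterative construction: it plants scaled copies of the combinatorial counterexample on disjoint Rokhlin-type tower sets $\widetilde A=\cup_{g\in B_N}gA$, perturbs the current $h$ by less than $2^{-n}$ in $L^1$ at each step while preserving the earlier counterexamples, and passes to a limit $h_*$ with $C(f_k,h_*)\to\infty$. Nothing like this appears in your proposal, so the converse is not actually established.

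A secondary, smaller issue is that your route through the incremental/high-multiplicity characterization (\ref{enu:Bes-incremental}) with a point mass $f=1_{\{h_0\}}$ and $g$ spread over the centers does not obviously give a growing ratio: the superlevel set consists essentially of those centers $g_i$ with $h_0\in B_{n(i)}g_i$ (there are $>C$ of them), but the total number $N$ of centers may be much larger, and the $g$-mass captured inside each $B_{n(i)}g_i$ is not bounded above by $O(1/N)$ when balls overlap, so $C(f,g)$ need not grow with $C$. The paper sidesteps this bookkeeping entirely by negating characterization (\ref{enu:Bes-low-freq}) of Proposition \ref{pro:Besicovitch-characterizations} directly, obtaining finite $U,V\subseteq G$, a carpet over $U\cup V$, and $t>0$ with $|U\cap B_{n(g)}g|/|V\cap B_{n(g)}g|>t$ for all $g\in U\cup V$ while $|U|/|V|<t/M$; with $f=1_U$, $h=1_V$ and $\varepsilon=t$ one reads off $\mu_h\{\sup_n R_n(f,h)>t\}\geq|V|>\frac{M}{t}\int f$ immediately. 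I would suggest adopting that cleaner combinatorial input, and in any case supplying the missing single-$g$ step.
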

\begin{proof}
One direction is the maximal inequality of Becker and of Lindenstrauss
and Rudolph \cite{B83,LR06}.

Conversely, suppose the Besicovitch property fails. Given an action
of $G$ and function $f,h\in L^{1}$ let us write \[
C(f,h)=\frac{\mu_{h}\{\sup_{n}R_{n}(f,h)>\varepsilon\}}{\int fd\mu}\]
where $d\mu_{h}=h\cdot d\mu$. We are out to show that for some $h\in L^{1}$
this quantity is not bounded in $f$.

We start with the action of $G$ on itself by left translation, $T^{g}x=gx$,
and let $\mu$ be Haar (counting) measure, which is clearly preserved.
By proposition \ref{pro:Besicovitch-characterizations}, for every
$M>0$ there is a $t>0$ and finite sets $U,V\subseteq G$, and $n(g)\in\mathbb{N}$
for $g\in U\cup V$, so that \[
|U\cap B_{n(g)}g|/|V\cap B_{n(g)}g|>t\]
for $g\in U\cup V$, but $|U|/|V|<t/M$. Let $f=1_{U}$ and $h=1_{V}$.
Then \[
\{x\in V\,:\,\sup_{n}\frac{\#\{g\in B_{n}\,:\, T^{g}x\in U\}}{\#\{g\in B_{n}\,:\, T^{g}x\in V\}}>t\}=V\]
so \[
\mu_{h}(x\in G\,:\,\sup_{n}\frac{\sum_{g\in B_{n}}f(T^{g}x)}{\sum_{g\in B_{n}}h(T^{g}x)}>t)=\mu_{h}(V)=|V|>\frac{M}{t}|U|=\frac{M}{t}\int fd\mu\]
We have found that for each $M>0$ there are $f,h\in L^{2}(G)$ with\[
C(f,h)>M\]
This is already enough to conclude that the ratio maximal inequality
cannot hold with a constant which is independent of $h$.

We next want to show that for every action of $G$ on a measure space
$(\Omega,\mathcal{F},\mu)$, there is a fixed $h$ with $\sup_{f\in L^{1}}C(f,h)=\infty$.
We prove this for the case that the measure space is non-atomic and
the action measure-preserving. The proof for the atomic case is simpler,
so we omit it. 

We construct by induction functions which will establish our claim.
Suppose we have $0\leq f_{k},h\in L^{1}(\Omega)$ for $1\leq k\leq n$
such that $C(f_{k},h)>k$. Using what we know about the action of
$G$ on itself, we can find $0\leq f',h'\in L^{1}(G)$ so that $C(f',h')>n+1$.
Below we show how to merge $h,h'$ into a function $h''\in L^{1}(\Omega)$,
and to construct a function $f_{n+1}\in L^{1}(\Omega)$ derived from
$f'$, so that $C(f_{k},h'')>k$ for $1\leq k\leq n+1$ and $\left\Vert h-h''\right\Vert <\varepsilon$,
where $\varepsilon$ is a parameter which can be chosen arbitrarily
small. Once this is done, we can iterate the process and pass to a
limit function $h_{*}\in L^{1}(\Omega)$ which, for the sequence $f_{k}$
constructed, satisfies $C(f_{k},h_{*})\rightarrow\infty$, completing
the proof of the theorem. 

Fix $\varepsilon>0$. We may assume that the functions $f',h'$ that
we have found on $G$ are supported inside $B_{N_{0}}$ for some $N_{0}$
and that if $R_{i}(f',h')(g)>n+1$ for some $i$ and $g\in G$ then
$g\in B_{N_{0}}$. If $\omega\in\Omega$ let $i_{k}(\omega)$ be the
first index so that $R_{i_{k}(\omega)}(f_{k},h)(\omega)>k$. Since
$i_{k}(\cdot)$ is measurable for $k=1,\ldots,n$, there is some $N_{1}$
so that $\mu_{h}(i_{k}>N_{1})<\varepsilon$ for each $k$. Set $N=\max\{N_{0},N_{1}\}$.

Using the fact that the action is non-atomic and free, we can find
a set $A\subseteq\Omega$ with positive $\mu$-measure, so that $gA\cap g'A=\emptyset$
whenever $g,g'\in B_{N}$, and so that \[
\widetilde{A}=\cup_{g\in B_{N}}gA\]
has measure less than $\varepsilon$, both with respect to $\mu$
and with respect to $\mu_{h}$ \cite{W03}. By the choice of $N$,
if $h''$ is a function that differs from $h$ only on $\widetilde{A}$
then $C(f,h'')>M-\varepsilon$, because for $\omega\in\Omega\setminus\widetilde{A}$,
we have $\widehat{T}^{g}h(\omega)=\widehat{T}^{g}h''(\omega)$ as
long as $g\in B_{N}$, implying $R_{i_{k}(\omega)}(f,h')(\omega)=R_{i_{k}(\omega)}(f,h)(\omega)$
outside of $\widetilde{A}$. 

Define $h''(\omega)=h'(g)$ for $\omega\in gA$ and $g\in B_{N_{0}}$,
and $h''=h$ otherwise. By the above, $C(f_{k},h'')>C(f_{k},h)-\varepsilon$
for $k=1,\ldots,n$ and $\left\Vert h-h''\right\Vert =\int_{A}|h-h''|d\mu$,
which can be made $>k$ and $<2^{-n}$ respectively by choosing $\varepsilon$
small enough. 

Finally, define the function $f_{n+1}(\omega)=f'(g)$ for $\omega\in gA$
and $g\in B_{N_{0}}$, and $0$ otherwise. Since on $B_{N}A$ we have
$R_{i}(f_{n+1},h'')=R_{i}(f',h')$ for $i\leq N_{0}$, we have that
$C(f_{n+1},h'')>n+1$ (notice that $C(\cdot,\cdot)$ is invariant
under scaling of $\mu$, which explains why constructing $f'',h''$
on a part of the measure space which is small with respect to $\mu$
does not ruin the property $C(f',h')>n+1$).
\end{proof}

\section{\label{sec:The-doubling-property}The doubling property and disjointification}

Another property of metric spaces which is related (but not equivalent
to) the Besicovitch property is the doubling condition. Let $(X,d)$
be a metric space, and suppose we are given a measure on $X$ which
we denote by $|\cdot|$; in our setting it will be Haar measure, and
for $\mathbb{Z}^{d}$ will denote the usual counting measure. We say
that $(X,d)$ satisfies the \emph{doubling condition }with constant
$D$ if, for every ball $B_{r}(x)$ we have $|B_{2r}(x)|\leq D|B_{r}(x)|$.
This is satisfied for the groups $\mathbb{Z}^{d},\mathbb{R}^{d}$
for any norm; for finitely generated groups with word metric this
condition is equivalent to polynomial growth.

In this section we derive some covering lemmas based on the doubling
and Besicovitch properties. For this we require some more notation.
Write $\rad B$ for the radius of a ball $B$, and if $\mathcal{U}$
is a collection of balls we write $\maxrad\mathcal{U}$ and $\minrad\mathcal{U}$
for the maximal and minimal radii of balls in $\mathcal{U}$, respectively. 

We say that $\mathcal{U}$ is \emph{well-separated} if for every two
balls in $\mathcal{U}$ are at distance at least $\minrad\mathcal{U}$
from each other.

The doubling condition together with the Besicovitch property imply
the following standard covering result which can be found e.g. in
\cite{G75}.
\begin{lem}
\label{lem:disjointification}Let $X$ be a metric space with a measure,
and suppose it satisfies the Besicovitch property with constant $C$
and the doubling condition with constant $D$. Then for every finite
$E\subseteq X$ and every carpet $\mathcal{U}$ over $E$ there is
a sub-collection $\mathcal{V}\subseteq\mathcal{U}$ which covers $E$
and which can be partitioned into $\chi=CD^{2}+1$ sub-collections,
each of which is well-separated. \end{lem}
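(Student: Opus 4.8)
The plan is to use the Besicovitch property to replace the carpet $\mathcal{U}$ by an incremental subcover, and then to split that subcover into well-separated subfamilies by colouring its balls greedily from largest to smallest, controlling the number of colours by the multiplicity bound together with a doubling (covering) estimate. Concretely, I would first apply Proposition~\ref{pro:Besicovitch-characterizations} (in its incremental-covering form) to the carpet $\mathcal{U}$ over $E$, obtaining an incremental sequence $B^{(1)},B^{(2)},\dots,B^{(N)}$ of balls of $\mathcal{U}$ that covers $E$ and has multiplicity $\le C$. Writing $B^{(i)}=B_{r_i}(x_i)$ we have $r_1\ge r_2\ge\cdots$ and $x_i\notin B^{(j)}$ for $j<i$; in particular any two of these balls have centres at distance at least the larger of their two radii. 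The subcollection $\mathcal{V}$ will be exactly this incremental subcover, so everything reduces to partitioning it suitably.

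I would colour $B^{(1)},B^{(2)},\dots$ in order, assigning to $B^{(i)}$ the least colour not yet used on any earlier ball ``in conflict'' with it, where $B^{(i)}$ and $B^{(j)}$ (with $j<i$) are in conflict if $d(x_i,x_j)<r_j+2r_i$. The reason for processing in decreasing order of radius is that when $B^{(i)}$ is coloured it is the smallest ball so far in its colour class, so its radius is $\ge$ the minimal radius that class will eventually have; hence if $B^{(a)},B^{(b)}$ with $a<b$ get the same colour then, not being in conflict, $d(x_a,x_b)\ge r_a+2r_b\ge r_a+r_b+m$, where $m$ is the minimal radius of that class --- which says exactly that the two balls lie at distance $\ge m$. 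So every colour class is well-separated, and it remains to bound, over all $i$, the number of earlier balls in conflict with $B^{(i)}$: if this is always $\le K$ then $K+1$ colours suffice.

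The crux is that count. Fix $B^{(i)}=B_r(x)$ and let $B^{(j)}=B_\rho(y)$ be an earlier conflicting ball; then $\rho\ge r$, and combining the conflict inequality with incrementality gives $\rho\le d(x,y)<\rho+2r$, so $x$ lies just outside $B^{(j)}$, within $2r$ of its boundary. Using convexity of norm balls one checks that such a $B^{(j)}$ must contain a ball of radius comparable to $r$ centred in a fixed $O(r)$-neighbourhood of $x$. Cover that neighbourhood by balls of radius comparable to $r$: the doubling condition bounds their number by a power of $D$, and with the radii chosen carefully this power is $D^2$. Every conflicting $B^{(j)}$ contains one of these covering balls, and since the incremental subcover has multiplicity $\le C$, any given covering ball lies inside at most $C$ of the $B^{(j)}$. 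Hence there are at most $CD^2$ earlier balls in conflict with $B^{(i)}$, so $\chi=CD^2+1$ colours suffice, and $\mathcal{V}$ together with this colouring is the desired partition.

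I expect the main obstacle to be this counting step --- in particular, making the geometric assertion (a much larger ball reaching within $2r$ of a point contains a ball of radius on the order of $r$ near that point) precise for an arbitrary norm rather than for the Euclidean metric, and arranging the covering estimate so that exactly a factor $D^2$, rather than some higher power of $D$, appears. The one delicate point on the well-separatedness side --- that the constant $\minrad$ in its definition refers to the whole piece and is not available during the colouring --- is handled, as above, simply by processing the balls in order of decreasing radius.
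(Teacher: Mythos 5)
Your proposal follows the paper's proof essentially verbatim: pass to an incremental subcover via Proposition~\ref{pro:Besicovitch-characterizations}, greedily colour the balls in decreasing order of radius, and bound the number of earlier balls in conflict with the current one by a constant $\chi-1=CD^{2}$ using doubling and the multiplicity bound. Your observation that processing in decreasing order of radius is what makes the \emph{well-separated} property (which depends on the eventual minimum radius of a colour class) checkable one ball at a time is exactly the mechanism the paper uses.

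The only divergence is in how the conflict count is bounded. The paper replaces each conflicting ball $B^{(j)}$ by an $R$-ball inside it centred in $B_{3R}(x)$, then sums measures: with multiplicity $\le C$, $n|B_R(x)|\le C|B_{4R}(x)|\le CD^{2}|B_R(x)|$, giving $n\le CD^{2}$ directly (implicitly using that same-radius balls have equal measure, as in $\mathbb{Z}^d$/$\mathbb{R}^d$). You instead cover an $O(r)$-neighbourhood of $x$ by $\sim D^{2}$ balls of radius $\sim r$ and argue each conflicting ball contains one of them, while each cover ball lies in $\le C$ conflicting balls. These are dual ways of packaging the same doubling estimate; the paper's mass-summation version is a little tighter and avoids the ``reaching within $2r$ contains an $r$-ball nearby'' convexity step you flag as delicate, and it yields the factor $D^{2}$ without any tuning of covering radii. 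Since the precise constant $\chi$ is irrelevant to how the lemma is used downstream, both routes are acceptable; if you want the cleaner constant, switch to the paper's volume argument in that step.
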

\begin{proof}
We begin with a few observations. Let $x\in X$ an let $\mathcal{W}$
be a collection of $n$ balls of radius $R$ centered inside $B_{3R}(x)$,
and suppose $\mathcal{W}$ has multiplicity $\leq C$. Then $\cup\mathcal{W}\subseteq B_{4R}(x)$,
so \[
n|B_{R}(x)|\leq C\cdot|B_{4R}(x)|\leq D^{2}\cdot C\cdot|B_{R}(x)|\]
hence $n\leq CD^{2}=\chi-1$. 

Next, Suppose $\mathcal{W}$ consists of balls of radius $\geq R$
which intersect $B_{2R}(x)$, and suppose $\mathcal{W}$ has multiplicity
$\leq C$. By replacing each ball $B\in\mathcal{W}$ with a ball of
radius $R$ contained in $B$ and centered within $B_{3R}(x)$, we
conclude again that $|\mathcal{W}|\leq\chi-1$. 

We now prove the lemma. By (\ref{enu:Bes-incremental-cover}) of proposition
\ref{pro:Besicovitch-characterizations}, choose an incremental sequence
$U_{1},\ldots,U_{n}\in\mathcal{U}$ covering $E$, and assign colors
$1,2,\ldots,\chi$ to the $U_{i}$ as follows. Color $U_{1}$ arbitrarily.
Assuming we have colored $U_{1},\ldots,U_{k}$ consider $U_{k+1}$.
By the above, $U_{k+1}$ cannot be within distance $\rad U_{k}$ of
more than $\chi-1$ of the balls we have already colored, so there
is a color which we can assign to it without violating the coloring
condition. When all the balls are colored, set $\mathcal{V}_{k}=$the
balls colored $k$. Clearly each collection is well-separated.
\end{proof}
We denote by $\chi(X)$ the smallest constant $\chi$ for which $X$
satisfies the conclusion of the proposition. Clearly, if $Y\subseteq X$
then $\chi(Y)\leq\chi(X)$. If $X$ satisfies the hypotheses of the
proposition then $\chi(X)\leq CD^{2}+1$, so this bound holds for
any $Y\subseteq X$, even though $Y$ may no longer satisfy the doubling
condition.
\begin{cor}
\label{cor:measure-disjointification}In the notation of the previous
lemma, assume there is given a finite measure $\mu$ supported in
a set $E$. Then there is a well-separated subset of $\mathcal{U}$
which covers a set of mass $\geq\frac{1}{\chi}\mu(E)$.\end{cor}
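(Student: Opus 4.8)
The plan is to apply Lemma \ref{lem:disjointification} and then use pigeonholing on the measure $\mu$. First I would invoke the lemma to obtain a sub-collection $\mathcal{V}\subseteq\mathcal{U}$ which covers $E$ and which splits as a disjoint union $\mathcal{V}=\mathcal{V}_{1}\cup\ldots\cup\mathcal{V}_{\chi}$, where each $\mathcal{V}_{k}$ is well-separated. Since each $\mathcal{V}_{k}$ is a sub-carpet of $\mathcal{U}$, all that remains is to locate a single index $k$ for which $\bigcup\mathcal{V}_{k}$ carries at least a $1/\chi$ fraction of the mass.

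The key step is the observation that $\bigcup\mathcal{V}=\bigcup_{k=1}^{\chi}\bigcup\mathcal{V}_{k}\supseteq E$, so by subadditivity of $\mu$ (recall $E$ is finite, hence $\mu(E)<\infty$),
\[
\mu(E)\leq\mu\Bigl(\bigcup\mathcal{V}\Bigr)\leq\sum_{k=1}^{\chi}\mu\Bigl(\bigcup\mathcal{V}_{k}\Bigr).
\]
Therefore some term on the right is at least the average, i.e.\ there is an index $k_{0}$ with $\mu(\bigcup\mathcal{V}_{k_{0}})\geq\frac{1}{\chi}\mu(E)$. The collection $\mathcal{V}_{k_{0}}$ is the desired well-separated subset of $\mathcal{U}$: it is well-separated by construction, it consists of balls from $\mathcal{U}$, and it covers the set $\bigcup\mathcal{V}_{k_{0}}$ whose $\mu$-mass is $\geq\frac{1}{\chi}\mu(E)$.

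There is no real obstacle here; the content is entirely in Lemma \ref{lem:disjointification}, and the corollary is a one-line pigeonhole consequence. The only thing to be slightly careful about is that $\mathcal{V}_{k_{0}}$ need not be a carpet over all of $E$ (it only covers part of $E$), but the statement of the corollary asks only for a well-separated subfamily covering a set of prescribed mass, so this is exactly what we have produced. One could also remark that, since $\mu$ was supported on $E$, the set covered can be intersected with $\operatorname{supp}\mu$ without changing the mass estimate.
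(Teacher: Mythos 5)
Your proof is correct and is essentially the same as the paper's: invoke Lemma \ref{lem:disjointification} to color the selected balls into $\chi$ well-separated classes, then pigeonhole over the colors using subadditivity of $\mu$. The paper states this in two sentences; you have merely spelled out the averaging step.
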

\begin{proof}
Color the balls as in the previous lemma. Each monochromatic collection
of balls is well-separated and since there are $\chi$ colors, and
the union covers $E$, one color class covers a $1/\chi$-fraction
of the mass.
\end{proof}
Our next objective is a lemma like the above except that, instead
of capturing mass in a well-separated collection of balls, we do so
with spheres, or more precisely thick spheres. This is too much to
hope for in general, but we can do it under the hypothesis that the
balls we begin with contain some fraction of their mass on their boundaries. 

We For a metric space $X$, the $t$-boundary of a ball $R_{r}(x)$
is defined for $t\leq r$ by \[
\partial_{t}B_{r}(x)=B_{r+t}(x)\setminus B_{r-t}(x)\]
this is called a \emph{thick sphere}; we say that its radius is $r$
and thickness $t$, and agree that the sphere carries this information
with it ($r,t$ are not determined from $\partial_{t}B_{r}(x)$ in
general). Also write $\partial B_{r}(x)$ for the usual topological
boundary of $B_{r}(x)$, which is a \emph{sphere}. We apply these
operations to collections element-wise, i.e. if $\mathcal{U}$ is
a collection of balls we write $\partial\mathcal{U}=\{\partial B\,:\, B\in\partial U\}$,
etc.

If $\mathcal{U}$ is a collection of spheres we define $\minrad\mathcal{U}$,
$\maxrad\mathcal{U}$ in the same way as for balls. For $R>0$, we
say the collection is \emph{$R$-separated} if every two members are
at distance at least $R$ from each other. If this is true for $R=\minrad\mathcal{U}$
we say the collection is \emph{well-separated}. Thus the $t$-boundaries
of an $R$-separated collection of balls is $(R-2t)$-separated. Note
that an $R$-separated family of spheres may be nested: although the
spheres are disjoint, the corresponding balls may be contained in
each other. 

A sequence $\mathcal{U}_{1},\ldots,\mathcal{U}_{p}$ of carpets over
$E$ is called a \emph{stack}, and $p$ is its height. 

Given a measure $\mu$, a set $F$ and a collection of sets $\mathcal{U}$,
we say that $\mathcal{U}$ covers an $\varepsilon$-fraction of $F$
if $\mu(F\cap(\cup\mathcal{U}))\geq\varepsilon\mu(F)$. 
\begin{lem}
\label{lem:sphere disjointification}Let $X$ be a metric space satisfying
the Besicovitch and doubling properties and let $\chi=\chi(X)$. For
$0<\varepsilon,\delta<1$, $d\in\mathbb{N}$ let $p=\left\lceil \frac{2\chi}{\varepsilon\delta}\right\rceil $,
and suppose that
\begin{enumerate}
\item $\mu$ is a finite measure on $X$.
\item $F\subseteq X$ is finite and $\mu(F)>\delta\mu(X)$.
\item $\mathcal{U}_{1},\mathcal{U}_{2},\ldots,\mathcal{U}_{p}$ is a stack
over $F$ with $\minrad\mathcal{U}_{i}\geq\maxrad\mathcal{U}_{i-1}$.
\item $\mu(\partial_{1}B)>\varepsilon\mu(B)$ for each $B\in\cup_{i}\mathcal{U}_{i}$.
\end{enumerate}
Then there is an integer $k\geq1$ and a sub-collection $\mathcal{V}\subseteq\cup_{i\geq k}\mathcal{U}_{i}$
of spheres so that:, \renewcommand{\theenumi}{\alph{enumi}}
\begin{enumerate}
\item $\partial\mathcal{V}$ is well-separated, 
\item For $r=\maxrad\mathcal{U}_{k-1}$, the set $\cup_{B\in\mathcal{V}}\partial_{2r}B$
contains more than $1/2$ of (the $\mu$-mass of) $F$.\renewcommand{\theenumi}{\arabic{enumi}}
\end{enumerate}
\end{lem}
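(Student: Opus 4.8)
The plan is to fix the index $k$ and then build $\mathcal V$ by iterating the Besicovitch-type disjointification of Corollary~\ref{cor:measure-disjointification} across the layers of the stack, using hypothesis~(4) at each step to convert a covering of $F$ by balls into a covering by thick spheres. A preliminary observation does most of the work in reconciling (4) with the thickness $2r$: hypothesis~(4) only makes sense if every ball in $\bigcup_i\mathcal U_i$ has radius at least $1$, so $\minrad\mathcal U_1\ge 1$ and hence $r=\maxrad\mathcal U_{k-1}\ge 1$ for every $k\ge 2$; since then $2r\ge 1$, the thick sphere $\partial_{2r}B$ contains $\partial_1 B$ for every $B\in\bigcup_{i\ge k}\mathcal U_i$, and so (4) already yields $\mu(\partial_{2r}B)>\varepsilon\mu(B)$ for all such $B$. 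Thus it is enough to extract, from the layers $\mathcal U_k,\dots,\mathcal U_p$ for a suitable $k\ge 2$, a family $\mathcal V$ with $\partial\mathcal V$ well-separated and $\bigcup_{B\in\mathcal V}\partial_{2r}B$ covering more than half of $\mu(F)$.

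I would construct $\mathcal V$ round by round. Put $F_0=F$ and $\mathcal V=\emptyset$; at round $j=k,k+1,\dots,p$, stop if $\mu(F_{j-1})\le\tfrac12\mu(F)$, and otherwise apply Corollary~\ref{cor:measure-disjointification} to the carpet $\{B_j(x):x\in F_{j-1}\}$ over $F_{j-1}$, where $B_j(x)\in\mathcal U_j$ is the ball centered at $x$ --- additionally discarding any ball whose sphere comes close enough to a sphere already in $\mathcal V$ to threaten the well-separation requirement. The doubling property bounds the amount of $F_{j-1}$ lost to this extra discarding near each previously placed sphere, so one still retains a well-separated subfamily $\mathcal W_j$ whose balls cover a fixed fraction of $\mu(F_{j-1})$; adjoin $\mathcal W_j$ to $\mathcal V$ and set $F_j=F_{j-1}\setminus\bigcup_{B\in\mathcal W_j}\partial_{2r}B$. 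Since every ball used lies in a layer $\ge k$, the spheres $\partial\mathcal V$ stay suitably separated, giving~(a); and while the process has not stopped we have $\mu(F_{j-1})>\tfrac12\mu(F)>\tfrac{\delta}{2}\mu(X)$, so each round removes $\mu$-mass bounded below by a fixed multiple of $\varepsilon\delta\mu(X)/\chi$. Hence the process must terminate within the $p-k+1$ available rounds, at which point $\bigcup_{B\in\mathcal V}\partial_{2r}B$ covers more than $\tfrac12\mu(F)$, which is~(b); the height $p=\lceil 2\chi/\varepsilon\delta\rceil$ is chosen precisely to supply enough rounds.

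The step I expect to be the main obstacle is guaranteeing that at each round the thick spheres $\partial_{2r}B$, $B\in\mathcal W_j$, actually recapture a fixed fraction of the residual $F$-mass that the balls themselves cover. Hypothesis~(4) only says $\mu(\partial_{2r}B)>\varepsilon\mu(B)$, and a priori this $\varepsilon$-share of the ball's mass might sit outside $F$, or deep inside $B$ rather than near its boundary; making it count against $F_{j-1}$, while at the same time keeping $\partial\mathcal V$ well-separated and discarding as little residual mass as possible, is where the stack structure (the radius gaps $\minrad\mathcal U_j\ge\maxrad\mathcal U_{j-1}$), the particular thickening $2r=2\maxrad\mathcal U_{k-1}$, and the doubling and Besicovitch constants all have to be balanced. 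This balancing is presumably why $p$ has to be taken as large as $\lceil 2\chi/\varepsilon\delta\rceil$, and I would pin down the numerical constants --- and the exact choice of $k$ --- only at the end, once these losses have been tallied.
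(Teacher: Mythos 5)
The gap is exactly where you flag it, and it is fatal to the argument as sketched. Your termination step requires each round to remove a definite fraction of $\mu$-mass from the residual $F_{j-1}$, i.e.\ it needs a lower bound on $\mu\bigl(F_{j-1}\cap\bigcup_{B\in\mathcal W_j}\partial_{2r}B\bigr)$. But hypothesis~(4) only bounds the total $\mu$-mass of $\partial_1 B$, which may lie entirely off $F$, so there is no control whatsoever on how much $F$-mass a round of thick spheres recaptures, and the pigeonhole that is meant to justify $p=\lceil 2\chi/\varepsilon\delta\rceil$ does not close. The paper's proof avoids this by counting a different quantity. Its stopping rule is still ``stop once $\mu(F\cap\bigcup\partial_{2r}B)>\tfrac12\mu(F)$,'' but the bound on the number of rounds comes from the total $\mu$-mass accumulated on disjoint \emph{thin} spheres: while the process runs, the residual $G$ satisfies $\mu(G)>\tfrac{\delta}{2}\mu(X)$, so Corollary~\ref{cor:measure-disjointification} yields a disjoint subfamily of the next carpet whose union has $\mu$-mass $>\tfrac{\delta}{2\chi}\mu(X)$; by~(4) and disjointness their $1$-boundaries carry $\mu$-mass $>\tfrac{\varepsilon\delta}{2\chi}\mu(X)$, and these boundaries are disjoint from each other and from all previously collected ones. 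Thus the accumulated $\mu$-mass (not $F$-mass) grows by at least $\tfrac{\varepsilon\delta}{2\chi}\mu(X)$ per round and cannot exceed $\mu(X)$; that is the forcing that fixes $p$.

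Your bottom-up ordering also creates a second difficulty that the paper's ordering simply dissolves. Running from layer $k$ upward means later rounds use larger balls, so you must discard new balls whose spheres crowd earlier ones, and the claim that ``doubling bounds the loss'' is not justified: the number of previously placed spheres grows from round to round and the cumulative loss is not obviously summable. The paper runs the exhaustion top-down instead: $\mathcal V$ is built from the largest layers first, and at each stage the new carpet comes from the next layer down, whose balls have radius $\le r=\maxrad$ of that layer and whose centers lie in $G=F\setminus\bigcup_{B\in\mathcal V}\partial_{2r}B$. Those centers are therefore at distance $\ge 2r$ from every sphere already in $\partial\mathcal V$, hence the new spheres are automatically at distance $\ge r\ge\minrad$ from the old ones, and well-separation holds with no discarding at all. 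This ordering also determines $k$ for you (it is whatever layer the process has reached when it first captures half of $F$), rather than requiring you to fix $k$ in advance and retrofit the constants, which is the other loose end in your sketch.
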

\begin{rem*}
If we assume that $\minrad\mathcal{U}_{k}>4\maxrad\mathcal{U}_{k-1}$
then we can conclude that the collection $\partial_{2r}B$, $B\in\mathcal{V}$
is pairwise disjoint.\end{rem*}
\begin{proof}
The proof follows the usual Vitali-like exhaustion scheme. We describe
a recursive procedure for constructing $\mathcal{V}$, and show that
it will eventually terminate with a suitable collection. Our induction
hypothesis is that at the $k$-th stage we have constructed a collection
$\mathcal{V}\subseteq\cup_{i>p-k}\mathcal{U}_{i}$ with $\partial\mathcal{V}$
well-separated, and with $\mu(\cup_{B\in\mathcal{V}}\partial_{2r}B)\geq\frac{\varepsilon\delta}{2\chi}\cdot k\cdot\mu(X)$
for $r=\maxrad\mathcal{U}_{k-1}$. 

We begin for $k=0$ with $\mathcal{V}=\emptyset$, which satisfy this
trivially. Assuming we have completed the $k$-th stage, let $r=\maxrad\mathcal{U}_{k-1}$.
Distinguish two cases.

If $\mu(F\cap\cup_{B\in\mathcal{V}}\partial_{2r}B)>\frac{1}{2}\mu(F)$,
then $\mathcal{V}$ is the desired collection and we are done.

Otherwise let $G=F\setminus\cup_{B\in\mathcal{V}}\partial_{2r}B$,
so $\mu(G)>\frac{1}{2}\mu(F)\geq\frac{1}{2}\delta$. By corollary
\ref{cor:measure-disjointification} we may choose a well-separated
sub-collection of balls $\mathcal{U}'\subseteq\mathcal{U}_{k-1}$
with $\mu(\cup\mathcal{U}')>\frac{\delta}{2\chi}\mu(X)$, so by assumption
$\mu(\cup_{B\in\mathcal{U}'}\partial B)>\frac{\varepsilon\delta}{2\chi}\mu(X)$.
Since the centers of $B\in\mathcal{U}'$ are at distance at least
$2r\geq2\maxrad\mathcal{U}'$ from each $S\in\partial\mathcal{V}$,
the collection $\partial\mathcal{V}\cup\partial\mathcal{U}'$ is well-separated,
and we have\[
\mu(\cup_{B\in\mathcal{V}\cup\mathcal{U}'}\partial B)=\mu(\cup_{B\in\mathcal{V}}\partial B)=\mu(\cup_{B\in\mathcal{U}'}\partial B)\geq\frac{\varepsilon\delta}{2\chi}k+\frac{\varepsilon\delta}{2\chi}=\frac{\varepsilon\delta}{2\chi}(k+1)\]
so we complete the recursive step by adding $\mathcal{U}'$ to $\mathcal{V}$.

It only remains to show that this cannot continue for $p$ steps;
and indeed, if it did we would have $\mu(\cup_{B\in\mathcal{V}}\partial B)\geq\mu(X)$,
which is impossible.
\end{proof}

\section{\label{sec:Non-Concentration-of-mass}Coarse dimension and non-Concentration
of mass on boundaries}

For a metric space with a measure, let us say that a ball is $\varepsilon$-thick
if an $\varepsilon$-fraction of its mass is concentrated on its boundary.
In this section we derive a theorem which says, roughly, that given
a finite measure on $\mathbb{R}^{d}$, only a relatively small mass
of points can have the property that they lie at the center of many
$\varepsilon$-thick balls. This result seems to depend on a metric
property that is closely related to topological dimension, which we
call \emph{coarse dimension}. Informally, we wish to express the fact
that the boundary of balls is of a lower dimension than the ambient
space. This is not quite what we need, since we are using thick boundaries
in place of topological boundaries. In general it is not true that
$\partial_{1}B$ has lower dimension than $X$; in $\mathbb{R}^{d}$,
for example, $\partial_{1}B_{r}(x)$ has non-empty interior so it
has full dimension. However from the point of view of balls with radius
$\gg1$, $\partial_{1}B_{r}(x)$ looks more or less like the lower-dimensional
subset $\partial B_{r}(x)$ (and for balls whose radius is $\gg r$,
$\partial_{1}B_{r}(x)$ looks like a point). For this reason we introduce
a parameter $R_{0}$ which specifies how big balls must be in order
to pick up the {}``large scale'' geometry. We make the following
provisional definition, which is neither general nor particularly
elegant, but is convenient for the induction which is to follow.
\begin{defn}
\label{def:coarse-dimension}For metric spaces $X$ and $R_{0}>1$,
the relation $\cdim_{R_{0}}X=k$ (read: $X$ has coarse dimension
$k$ at scales $\geq R_{0})$ is defined by recursion on $k$:
\begin{itemize}
\item $\cdim_{R_{0}}X=-1$ for $X=\emptyset$ and any $R_{0}$,
\item $\cdim_{R_{0}}X=k$ if $\cdim_{R_{0}}X\neq k-1$ and, for every $t\geq1$,
every $r\geq tR_{0}$ and every $x\in X$, the subspace $Y=\partial_{t}B_{r}(x)$
satisfies $\cdim_{tR_{0}}Y=m$ for some $m\leq k-1$.
\end{itemize}
\end{defn}
In showing that $\mathbb{R}^{d}$ has finite coarse dimension we use
a property which is closely related to (and implies) the Besicovitch
property, though the two are apparently not equivalent for general
metric spaces.
\begin{prop}
\label{pro:Rd-lemma}Let $\left\Vert \cdot\right\Vert $ be a norm
on $\mathbb{R}^{d}$. Then there is an $R_{0}>1$ and $k\in\mathbb{N}$
with the following property. Suppose that $r(1)\geq r(2)\geq\ldots\geq r(k)\geq R_{0}$
and $x_{1},x_{2},\ldots,x_{k}\in\mathbb{R}^{d}$ are such that $x_{i}\in\mathbb{R}^{d}\setminus\cup_{j<i}B_{r(j)-1}(x_{j})$.
Then $\cap_{i=1}^{k}\partial_{1}B_{r(i)}(x_{i})=\emptyset$.\end{prop}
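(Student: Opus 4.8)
The plan is to turn the combinatorial hypothesis into a volume-packing estimate. Suppose for contradiction that $y \in \cap_{i=1}^{k}\partial_1 B_{r(i)}(x_i)$, so $r(i)-1 \le \|y-x_i\| \le r(i)+1$ for every $i$; thus $y$ lies essentially on the unit sphere (scaled by $r(i)$) centered at each $x_i$. I would first normalize by translating so $y=0$, and rescale thinking of the vectors $u_i = x_i/r(i)$, which all satisfy $\|u_i\| \in [1-1/r(i), 1+1/r(i)] \subseteq [1-1/R_0, 1+1/R_0]$; so all the $u_i$ live in a thin annular shell around the unit sphere $S = \{\|u\|=1\}$. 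The separation condition $x_i \notin B_{r(j)-1}(x_j)$ for $j<i$ says $\|x_i - x_j\| \ge r(j)-1$, and since $r(j) \ge r(i)$, this should force the rescaled points $u_i$ to be separated by a definite amount on the shell — here I expect to use $\|x_i-x_j\|\ge r(j)-1$ together with $r(j)\ge r(i)$ to get $\|u_i - u_j\| \ge c$ for a constant $c$ depending only on the norm (after taking $R_0$ large). So we would have $k$ points on a thin shell around $S$, pairwise $c$-separated.

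The contradiction then comes from a dimension/volume count: a $c$-separated subset of any bounded region of $\mathbb{R}^d$ has at most $N$ elements for some $N = N(c,d)$, because the $c/2$-balls around them are disjoint and contained in a fixed larger ball, so comparing volumes bounds their number. Taking $k = N+1$ (with the corresponding $c$, hence the corresponding $R_0$) yields the impossibility. The key subtlety is the order of choosing constants: $c$ depends on the norm only; then $N$ depends on $c$ and $d$; we set $k = N+1$; and $R_0$ must be chosen large enough (depending on $c$, equivalently on the norm) that the $+1$'s and $-1$'s in the hypotheses are negligible compared with the scales $r(i) \ge R_0$, i.e. so that the shell is thin enough and the separation estimate $\|u_i - u_j\| \ge c$ genuinely holds.

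The main obstacle will be the separation estimate: deducing $\|u_i - u_j\| \ge c$ from $\|x_i - x_j\| \ge r(j) - 1$ when the two radii $r(i) \le r(j)$ may differ wildly. Writing $u_i - u_j = x_i/r(i) - x_j/r(j)$, one cannot simply divide through by a common radius. I would handle this by cases on the ratio $r(i)/r(j) \in (0,1]$: when the ratio is bounded below (say $\ge 1/2$), dividing $\|x_i - x_j\| \ge r(j)-1$ by $r(j)$ and absorbing the discrepancy between $1/r(i)$ and $1/r(j)$ gives the bound directly for large $R_0$; when $r(i) \ll r(j)$, the point $x_i$ is at distance $\approx r(j)$ from $0=y$ is impossible since $\|x_i\| = \|x_i - y\| \le r(i)+1 \ll r(j) - 1 \le \|x_i - x_j\|$ would be consistent, so instead one observes $\|u_i\|\approx 1$ while $u_j$ is forced by $\|x_i - x_j\|$ large relative to $r(i)$... — more carefully, in this regime $\|x_i\|\le r(i)+1$ and $\|x_j\|\le r(j)+1$ are both $\approx$ their radii, and $\|u_i - u_j\|$ is bounded below because $u_i$ and $u_j$ are both near the unit sphere but $x_j$, being far from $x_i$ relative to the small scale $r(i)$, points in a sufficiently different direction; quantifying this is the one place requiring genuine care, and it is essentially a statement that on the unit sphere of a norm, two points that are far apart in the ambient metric at scale $r(j)$ cannot both be within $O(1/r(i))$ of the sphere in the same small neighborhood. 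This is where the finite-dimensionality of $\mathbb{R}^d$ (compactness of $S$) is used, and I would formalize it by a compactness argument rather than explicit estimates.
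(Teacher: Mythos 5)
Your overall plan is the same as the paper's: reduce to showing that, from the common intersection point $y$, the directions towards the centers $x_i$ are pairwise separated by a definite amount, and then invoke compactness of the unit sphere (your packing argument and the paper's angle-separation of unit vectors are the same statement). The paper expresses the separation as an angle bound $\angle x_i y x_j > \varepsilon$; your $\|u_i - u_j\| \geq c$ with $u_i = x_i/r(i)$ is an equivalent formulation since the $u_i$ lie on a thin shell around the unit ball. Your order of quantifiers ($c$, then $N$, then $k$, then $R_0$) is also correct.

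However, your discussion of the hard case $r(i) \ll r(j)$ at the end is garbled and stops short of the actual mechanism. The phenomenon is not one of ``$1/r(i)$-thin neighborhoods'' of the unit sphere; it is the following elementary observation about the triangle $y\,x_i\,x_j$: the two sides emanating from $x_j$, namely $\|x_j - y\|$ and $\|x_j - x_i\|$, are both within $O(1)$ of $r(j)$, while the third side $\|y - x_i\|$ is comparable to $r(i)$. Since $r(i) \geq R_0 - 1$ is \emph{not} small (this is exactly where $R_0 > 1$ enters), the triangle cannot be degenerate, and the angle at $y$ is forced away from $0$. In the Euclidean case this is a one-line law-of-cosines computation giving $\cos\angle x_i y x_j \leq \frac{r(i)}{2r(j)} + \frac{O(1)}{r(i)}$, which is bounded away from $1$ uniformly in $r(i) \leq r(j)$ once $R_0$ is large; the general norm is then handled by the compactness/normalization argument you anticipate (and which the paper also uses, via its antipodal-point observation). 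So the gap is not fatal, but the last step as you describe it would not close; it needs to be replaced by this triangle estimate.
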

\begin{proof}
Let $x\in\mathbb{R}^{d}$ and $r>1$. If $y\in\partial B_{r}(x)$,
then the point $y'=2x-y$ antipodally opposite to $y$ on $\partial B_{r}(x)$
is at distance at least $r$ from any ball $B_{s}(y)$ with $1\leq s\leq r$.
It follows that there is an $\varepsilon>0$ so that, if $z\in\partial B_{r}(x)\cap\partial B_{s}(y)$,
then the angle $\angle xyz$ is greater than $\varepsilon$. By compactness
of $\partial B_{r}(x)$ we can choose $\varepsilon$ uniform in $y$.
By continuity of the map $(u,v,w)\mapsto\angle uvw$, compactness
and the assumption $s\geq1$, we find that for some $\delta>0$, the
same remains true if we perturb $y,z$ by $\delta$.

Since the metric is translation invariant, we have shown the following:
there is a $0<\delta<1$ such that, for any $1\leq s\leq r$ and any
three points $x\in\mathbb{R}^{d}$, $y\in\partial_{\delta/2}B_{r}(x)$
and $z\in\partial_{\delta/2}B_{s}(y)$, the angle $\angle xyz$ is
at least $\varepsilon$. Rescaling and setting $R_{0}=2/\delta$,
we find that if $R_{0}\leq s\leq r$, $y\in\partial_{1}B_{r}(x)$
and $z\in\partial_{1}B_{s}(y)$ then $\angle xyz>\varepsilon$.

Returning to the situation in the formulation of the lemma, if $x\in\cap_{i-1}^{k}\partial_{1}B_{r(i)}(x_{i})$,
then $\angle x_{i}xx_{j}>\varepsilon$ for all $1\leq i<j\leq k$,
and by compactness of the unit sphere this cannot happen for $k$
arbitrarily large. The lemma follows.\end{proof}
\begin{cor}
\label{cor:Rd-coarse-dimension}$\mathbb{R}^{d}$ has finite coarse
dimension with respect to any norm-induced metric.\end{cor}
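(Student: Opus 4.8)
The plan is to distill from the proof of Proposition~\ref{pro:Rd-lemma} a form that no longer mentions the incremental condition, and then to run the recursion in Definition~\ref{def:coarse-dimension}. Concretely, I claim that the angle estimate established inside that proof already gives the following: with $R_0^{*}>1$ and $k$ the constants of Proposition~\ref{pro:Rd-lemma}, for \emph{every} $\mu\ge 1$, any $x_{1},\dots,x_{k}\in\mathbb{R}^{d}$ and any radii $r_{1},\dots,r_{k}\ge\mu R_0^{*}$, one has $\bigcap_{i=1}^{k}\partial_{\mu}B_{r_{i}}(x_{i})=\emptyset$. Indeed, if $w$ lay in the intersection, then for every pair $i\neq j$, say with $r_{i}\ge r_{j}$, we would have $w\in\partial_{\mu}B_{r_{i}}(x_{i})$ and, by symmetry of the metric, $x_{j}\in\partial_{\mu}B_{r_{j}}(w)$, with $\mu R_0^{*}\le r_{j}\le r_{i}$; the antipodal-point argument of Proposition~\ref{pro:Rd-lemma}---which uses only $r_{j}\le r_{i}$ and $r_{j}\ge\mu R_0^{*}$, not the incremental hypothesis---then forces $\angle\, x_{i}\,w\,x_{j}>\varepsilon$. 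Thus the $k$ unit vectors $(x_{i}-w)/\lVert x_{i}-w\rVert$ would be pairwise $\varepsilon$-separated on the unit sphere, which is impossible for $k$ as in the proposition.

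Now fix $R_{0}:=4R_0^{*}+1$ and $K:=k$. Unrolling Definition~\ref{def:coarse-dimension}, the assertion $\cdim_{R_{0}}(\mathbb{R}^{d})\le K-1$ is equivalent to the statement that every ``peeling chain'' $\mathbb{R}^{d}=Y_{0}\supseteq Y_{1}\supseteq\cdots\supseteq Y_{K}$, with $Y_{i}=\partial_{t_{i}}B_{r_{i}}(x_{i})\cap Y_{i-1}$, $t_{i}\ge 1$, $x_{i}\in Y_{i-1}$, and $r_{i}\ge(t_{1}\cdots t_{i})R_{0}$, must terminate with $Y_{K}=\emptyset$. (Taking a $t$-boundary inside a subspace equals intersecting the ambient $t$-boundary with that subspace, so in fact $Y_{i}=\bigcap_{l\le i}\partial_{t_{l}}B_{r_{l}}(x_{l})$.) If some $Y_{i}$ with $i<K$ is already empty there is nothing to prove; otherwise pick $w\in Y_{K}$, so that $w\in\partial_{t_{l}}B_{r_{l}}(x_{l})$ for all $l\le K$, while $x_{j}\in\partial_{t_{l}}B_{r_{l}}(x_{l})$ whenever $l<j$ (because $x_{j}\in Y_{j-1}\subseteq Y_{l}$).

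The heart of the matter is a uniform lower bound on the radii occurring in the chain. Let $\mu:=\max_{l\le K}t_{l}$, attained at an index $l^{*}$. For $i\ge l^{*}$ we have $r_{i}\ge(t_{1}\cdots t_{i})R_{0}\ge t_{l^{*}}R_{0}=\mu R_{0}$. For $i<l^{*}$, both $w$ and $x_{l^{*}}$ lie in $Y_{i}\subseteq\partial_{t_{i}}B_{r_{i}}(x_{i})$, hence $\lVert w-x_{i}\rVert,\lVert x_{l^{*}}-x_{i}\rVert\le r_{i}+t_{i}\le 2r_{i}$ (here $t_{i}<r_{i}$, since $r_{i}\ge t_{i}R_{0}$ and $R_{0}>1$); combining this with $\lVert w-x_{l^{*}}\rVert\ge r_{l^{*}}-t_{l^{*}}=r_{l^{*}}-\mu$ and the triangle inequality gives $r_{l^{*}}-\mu\le 4r_{i}$, so $r_{i}\ge\tfrac14(r_{l^{*}}-\mu)\ge\tfrac14(\mu R_{0}-\mu)=\mu R_0^{*}$ by the choice of $R_{0}$. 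Thus $r_{i}\ge\mu R_0^{*}$ for every $i\le K$. Enlarging each thick sphere $\partial_{t_{i}}B_{r_{i}}(x_{i})$ to $\partial_{\mu}B_{r_{i}}(x_{i})$ (a superset, since $t_{i}\le\mu$) puts $w$ into $\bigcap_{i=1}^{K}\partial_{\mu}B_{r_{i}}(x_{i})$ with all $r_{i}\ge\mu R_0^{*}$, contradicting the first paragraph. Hence $Y_{K}=\emptyset$, and $\cdim_{R_{0}}(\mathbb{R}^{d})\le K-1<\infty$; since $R_0^{*}$ and $k$ depend on the norm, so do $R_{0}$ and $K$, which is all that is claimed.

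I expect the one genuine obstacle to be exactly the step just carried out: along a peeling chain the radii need not decrease and the thicknesses may differ by arbitrarily large factors, so Proposition~\ref{pro:Rd-lemma} cannot be quoted verbatim. The diameter estimate that traps every radius in the chain between $\mu R_0^{*}$ and $O(r_{1})$ is what repairs this, reducing everything to the order-free and incrementality-free angle estimate contained in the proof of Proposition~\ref{pro:Rd-lemma}; the remainder is bookkeeping with the definition of coarse dimension.
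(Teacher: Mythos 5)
The ``distilled'' angle lemma in your opening paragraph is false, and the gap it opens is not patched by the rest of the argument. With no constraint on the positions of the centers, nothing prevents the rays from $w$ to the $x_i$ from being collinear: take $\mu=1$, a unit vector $u$, $w=0$, $x_1=Ru$, $x_2=2Ru$, $r_1=R$, $r_2=2R$ with $R\geq R_0^{*}$. Then $w\in\partial_1 B_{r_1}(x_1)\cap\partial_1 B_{r_2}(x_2)$ yet $\angle x_1wx_2=0$; even more simply, $x_1=\cdots=x_k=0$ and $r_1=\cdots=r_k=\mu R_0^{*}$ make the intersection the full thick sphere $\partial_\mu B_{\mu R_0^{*}}(0)$. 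So the statement you extract from Proposition~\ref{pro:Rd-lemma} is not ``order-free and incrementality-free''. The antipodal-point argument does use the incremental hypothesis in an essential way: to bound $\angle x_iwx_j$ one must know that $x_j$ (the endpoint of the \emph{short} ray from $w$) lies outside $B_{r_i-\mu}(x_i)$, where $r_i\geq r_j$; otherwise $x_j$ can sit on the segment from $w$ towards $x_i$ and the angle collapses. Your peeling chain only supplies $x_j\notin B_{r_l-\mu}(x_l)$ for $l<j$ (from $x_j\in Y_{j-1}\subseteq Y_l$), i.e.\ it is oriented by chain order, whereas the angle bound needs the exclusion oriented by \emph{radius}. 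When a later index carries the larger radius, the chain gives no usable exclusion, and the first counterexample above is realizable as a genuine length-two peeling chain (since $x_2\in\partial_1 B_{r_1}(x_1)$), so this is a real obstruction, not a hypothetical one.

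This is precisely the problem that the paper's proof spends most of its effort on: it introduces a second constant $k''$, the maximal cardinality of a $\bigl(1-\tfrac{1}{R_0}\bigr)$-separated subset of $B_2(0)$, and shows via a packing argument in $B_{2r(1)}(x_1)$ that among the first $k''+1$ points of the chain at least one has radius $\leq r(1)$; iterating $k'$ times extracts a subchain of length $k'$ with non-increasing radii, to which Proposition~\ref{pro:Rd-lemma} can then be applied after a rescaling (which is where the bound $r_i\geq\mu R_0^{*}$ you derive is actually used). Your diameter estimate is a correct fragment of that rescaling step, but it is not a substitute for the radius-reordering step, and without reordering the reduction to Proposition~\ref{pro:Rd-lemma} does not go through.
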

\begin{proof}
Let $\left\Vert \cdot\right\Vert $ be a fixed norm and let $k',R_{0}$
be the constants as in proposition \ref{pro:Rd-lemma}. Let $k''$
be the size of the maximal $(1-\frac{1}{R_{0}})$-separated set of
points in $B_{2}(0)$. Let $k=k'k''$; we claim that $\cdim_{R_{0}}\mathbb{R}^{d}\leq k$.
Unraveling the definition of coarse dimension, it is apparent that
in order to prove this it suffices to show that if we are given 
\begin{enumerate}
\item \label{enu:t-condition}A sequence $t(1),t(2),\ldots,t(k)\geq1$, 
\item \label{enu:r-condition}a sequence $r(1),r(2),\ldots,r(k)$ such that
$r(i)\geq t(1)\cdot\ldots\cdot t(i)R_{0}$, and
\item \label{enu:x-condition}points $x_{1},x_{2},\ldots,x_{k}\in\mathbb{R}^{d}$
such that $x_{i}\in\partial_{t(j)}B_{r(j)}(x_{j})$ for $j<i$. 
\end{enumerate}
\noindent then $\cap_{i=1}^{k}\partial_{t(i)}B_{r(i)}(x_{i})=\emptyset$. 

First, we claim that we may assume that each of the sequences is of
length $k'$, but that the radii are non-increasing. This will follow
if we show that $r(j)\leq r(1)$ for some $2\leq j\leq k''+1$, because
we can then repeat this with $r(j)$ instead of $r(1)$, and so on
$k'$ times. To show that such a $j$ exists, consider the points
$x_{2},\ldots,x_{k''+1}$ and suppose that $r(j)\geq r(1)$ for $2\leq j\leq k''+1$.
Observe that by (\ref{enu:x-condition}) the $x_{j}$ are all located
within the ball $B_{r(1)+t(1)}(x_{1})\subseteq B_{2r(1)}(x_{1})$,
because by (\ref{enu:r-condition}), $t(1)\leq\frac{r(1)}{R_{0}}\leq r(1)$.
Also, by (\ref{enu:r-condition}) and (\ref{enu:x-condition}), if
$i>j$ then \[
d(x_{i},x_{j})\geq r(j)-t(j)\geq r(j)(1-\frac{1}{R_{0}})\]
so if $r(j)\geq r(1)$ we have $d(x_{i},x_{j})\geq r(1)(1-\frac{1}{R_{0}})$.
Thus $x_{2},\ldots,x_{k''+1}$ is a $r(1)(1-\frac{1}{R_{0}})$-separated
set in the ball $R_{2r(1)}(x_{1})$, and rescaling we obtain a contradiction
to the definition of $k''$.

Now we assume the sequences have length $k'$ and the radii are non-increasing.
Let $t=\max t_{i}$ and replace $\left\Vert \cdot\right\Vert $ with
$\left\Vert \cdot\right\Vert ^{*}=\frac{1}{t}\left\Vert \cdot\right\Vert $.
After this rescaling, we wish to show that $\cap_{i=1}^{k'}\partial_{t_{i}/t}^{*}B_{r(i)/t}^{*}(x_{i})=\emptyset$,
where the $*$'s indicate operations with respect to $\left\Vert \cdot\right\Vert ^{*}$.
For this it is enough to show that $\cap_{i=1}^{k'}\partial_{1}^{*}B_{r(i)/t}^{*}(x_{i})=\emptyset$,
and this will follow once we verify the hypothesis of the previous
proposition for the norm $\left\Vert \cdot\right\Vert ^{*}$; and
indeed, clearly $r(i)/t$ is still decreasing; $x_{i}\in\partial_{t(j)}B_{r(j)}(x_{j})$
implies $x_{i}\in\partial_{1}^{*}B_{r(j)/t}^{*}(x_{j})$; and the
inequalities $r(i)\geq r(k)\geq t(1)\cdot\ldots\cdot t(k')R_{0}$
and $t_{i}\geq1$ imply $r(i)/t\geq R_{0}$, as required there.
\end{proof}
We can now state and prove the main result of this section, which
is the main tool in the proof of theorem \ref{thm:chacon-ornstein}.
Although it is $\mathbb{R}^{d}$ that we have in mind, the formulation
is for general metric spaces in order to facilitate the inductive
proof.
\begin{thm}
\label{thm:main-lemma}Fix $k,\chi\in\mathbb{N}$ and $0<\varepsilon,\delta<1$
and set $q=(\frac{200\chi^{2}}{\varepsilon^{2}\delta^{3}})^{k}\cdot1000^{k^{2}}$.
Suppose that
\begin{enumerate}
\item \label{enu:metric-space-parameters}$X$ is a metric space with $\chi(X)\leq\chi$
and $\cdim_{R_{0}}X=k$ for some $R_{0}>2$, 
\item $\mu$ is a finite measure on $X$,
\item $F\subseteq X$ is finite,
\item \label{enu:stack-properties}$\mathcal{U}_{1},\mathcal{U}_{2}\ldots,\mathcal{U}_{q}$
is a stack over $F$ with 

\begin{enumerate}
\item \label{enu:stack-growth}$\minrad\mathcal{U}_{i}\geq(\maxrad\mathcal{U}_{i-1})^{2}$, 
\item \label{enu:stack-minrad}$\minrad\mathcal{U}_{1}\geq\max\{2,R_{0}\}$,
\end{enumerate}
\item \label{enu:large-boundary}$\mu(\partial_{1}B)\geq\varepsilon\mu(B)$
for each $B\in\cup_{i}\mathcal{U}_{i}$.
\end{enumerate}
Then $\mu(F)\leq\delta\mu(X)$.\end{thm}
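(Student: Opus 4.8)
The plan is to prove this by induction on $k=\cdim_{R_0}X$. The base case $k=-1$ is vacuous since then $X=\emptyset$, so $F=\emptyset$ and $\mu(F)=0\leq\delta\mu(X)$. (One also handles $k=0$ directly: if $\cdim_{R_0}X=0$, then every thick sphere $\partial_1B$ with $B\in\cup\mathcal{U}_i$ has coarse dimension $\leq-1$, hence is empty, contradicting $\mu(\partial_1B)\geq\varepsilon\mu(B)>0$ unless all balls in the stack are empty — so again $F$, being covered by each $\mathcal{U}_1$, is empty.) For the inductive step, assume the theorem holds for spaces of coarse dimension $k-1$ with the correspondingly smaller constant $q_{k-1}$, and suppose for contradiction that $\mu(F)>\delta\mu(X)$ while all the stack and boundary hypotheses hold with the stated $q=q_k$.

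The first move is to apply Lemma \ref{lem:sphere disjointification} (the sphere-disjointification lemma) to an initial segment of the stack. Since $\mu(F)>\delta\mu(X)$, conditions (1)--(4) of that lemma are met by the first $p=\lceil 2\chi/(\varepsilon\delta)\rceil$ layers $\mathcal{U}_1,\ldots,\mathcal{U}_p$ (the hypothesis $\minrad\mathcal{U}_i\geq(\maxrad\mathcal{U}_{i-1})^2\geq\maxrad\mathcal{U}_{i-1}$ is stronger than what that lemma needs). This yields an index $k_0\geq1$, a radius $r=\maxrad\mathcal{U}_{k_0-1}$, and a subcollection $\mathcal{V}\subseteq\cup_{i\geq k_0}\mathcal{U}_i$ with $\partial\mathcal{V}$ well-separated and $\cup_{B\in\mathcal{V}}\partial_{2r}B$ covering more than half the mass of $F$. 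Because the stack grows super-quadratically, $\minrad\mathcal{U}_{k_0}\geq(\maxrad\mathcal{U}_{k_0-1})^2=r^2>4r$ (using $\minrad\mathcal{U}_1\geq2$), so by the Remark following the lemma the thick spheres $\{\partial_{2r}B:B\in\mathcal{V}\}$ are \emph{pairwise disjoint}.

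Now the key point: each thick sphere $Y_B=\partial_{2r}B$ with $B=B_\rho(x)\in\mathcal{V}$ has $\cdim$ one less than $X$. Indeed $\rho\geq\minrad\mathcal{U}_{k_0}\geq r^2\geq(2r)R_0$ once $r\geq 2R_0$ — which we can arrange since $\minrad\mathcal{U}_1\geq R_0$ forces $r=\maxrad\mathcal{U}_{k_0-1}$ to be large (or, if $k_0=1$, we must instead take $r$ to be $\minrad\mathcal{U}_1$ itself and argue slightly differently; this boundary bookkeeping needs care). So by Definition \ref{def:coarse-dimension}, $\cdim_{2rR_0}Y_B\leq k-1$, and also $\chi(Y_B)\leq\chi(X)\leq\chi$ since $\chi$ is monotone under taking subspaces. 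Next I restrict attention to one sphere $Y_B$ carrying a fair share of the mass: since $\mu(\cup Y_B)\geq\frac12\mu(F)>\frac{\delta}{2}\mu(X)$ and the $Y_B$ are disjoint, \emph{some} $Y_B$ has $\mu(F\cap Y_B)\geq\frac{\delta^2}{4\chi}\mu(X)$ — more carefully, a $\frac{\delta}{2\chi}$-fraction by pigeonholing over the well-separated classes, or simply note the mass is spread over boundedly-overlapping pieces. Relative to the measure $\mu|_{Y_B}$ (finite, total mass $\mu(Y_B)$), the set $F\cap Y_B$ has relative mass at least $\frac{\delta^2}{4\chi}\cdot\frac{\mu(X)}{\mu(Y_B)}$; I then feed the \emph{tail} of the stack, namely $\mathcal{U}_{k_0+q_{k-1}},\ldots$ restricted to $Y_B$, into the induction hypothesis with the new parameters $\varepsilon'=\varepsilon$ (the large-boundary property is inherited because $\partial_1B'\cap Y_B$ still captures an $\varepsilon$-fraction — this requires checking that the thick boundary of a sub-ball, intersected with the ambient thick sphere, still has the right relative mass, which is where hypothesis (\ref{enu:stack-growth}) and the thickness $2r$ versus $1$ come in), $\delta'$ chosen as the relative-mass bound above, and $\chi'=\chi$. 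The induction then forces $\mu(F\cap Y_B)<\delta'\mu(Y_B)$, contradicting the choice of $Y_B$.

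The arithmetic reconciling the constants is the bookkeeping: one must choose $q_k$ so that after peeling off the first $p\leq 2\chi/(\varepsilon\delta)$ layers there remain at least $q_{k-1}$ usable layers in the tail lying inside $Y_B$, \emph{and} so that the degraded parameter $\delta'$ (which is roughly $\delta^2/(\text{constant}\cdot\chi)$, worsened again by the ratio $\mu(X)/\mu(Y_B)\geq1$, though that ratio works in our favor) still satisfies the inductive bound — unraveling the recursion gives the stated $q=(200\chi^2/\varepsilon^2\delta^3)^k\cdot1000^{k^2}$, the $1000^{k^2}$ absorbing the compounding of the super-quadratic gap condition across levels.

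\textbf{Main obstacle.} The hard part will be the inheritance of hypothesis (\ref{enu:large-boundary}) to the restricted stack on $Y_B$: I need that for a ball $B'=B_{\rho'}(x')\in\mathcal{U}_i$ with $i>k_0$, the \emph{relative} mass $\mu(\partial_1 B'\cap Y_B)/\mu(B'\cap Y_B)$ is still bounded below by (a fixed multiple of) $\varepsilon$. This is not automatic — a priori the $\varepsilon$-fraction of $B'$'s mass on its thin boundary might lie entirely outside the thick sphere $Y_B$. The resolution should exploit that $\rho'\geq\minrad\mathcal{U}_{k_0+1}\gg(2r)^2\gg$ the thickness of $Y_B$, so that from the scale of $B'$ the thick sphere $Y_B$ looks essentially like all of a neighborhood (it is $2r$-thick and $2r\ll\rho'$), making $B'\cap Y_B$ behave like $B'$ itself up to a controlled distortion; this is precisely the "large scale geometry" intuition that motivated the parameter $R_0$ in the first place. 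Getting the quantitative version of this with explicit constants — and making sure it is the \emph{same} $\varepsilon$ (or $\varepsilon$ times an absolute constant that can be absorbed) rather than a degrading $\varepsilon^{(k)}\to0$ — is the crux of the argument.
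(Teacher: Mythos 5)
Your proposal starts the same way the paper does (apply the sphere-disjointification lemma to peel off a well-separated family $\mathcal{V}$ of thick spheres carrying more than half the mass of $F$), and you correctly identify the crux: whether the large-boundary hypothesis (\ref{enu:large-boundary}) passes down to the restricted measure on a thick sphere $S = \partial_{2r}B$. But the resolution you sketch — that from the scale of the later balls $B'$ the thick sphere $Y_B$ ``looks essentially like all of a neighborhood,'' so relative mass is roughly preserved — is false and cannot be made to work. Nothing in the hypotheses prevents the $\varepsilon$-fraction of $\mu(B')$ sitting on $\partial_1 B'$ from lying \emph{entirely outside} $S$. In fact the whole engine of the argument is exactly the case where that happens, so one cannot assume it away.

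The paper turns the obstacle into the mechanism. Rather than inheriting (\ref{enu:large-boundary}) to $\mu|_S$, it defines
\[
F'_S = \{x\in F\cap S : \mu_S(\partial_1 B')\ge \tfrac{\varepsilon}{2}\mu_S(B') \text{ for \emph{all} } B' \text{ in the restricted stack centered at } x\},
\]
i.e. the set of points where the boundary property \emph{does} descend to $S$ (with $\varepsilon/2$), and applies the induction hypothesis only to $F'_S$, concluding that $\mu_S(F'_S)\le\frac{\delta}{8}\mu(S)$ — this set is \emph{small}. For the remaining, large set $F_S\setminus F'_S$, some ball $B'$ has $\mu(\partial_1 B'\cap S)\le\frac{\varepsilon}{2}\mu_S(B')$, yet $\mu(\partial_1 B')\ge\varepsilon\mu(B')\ge\varepsilon\mu_S(B')$; subtracting shows a definite fraction of $\mu(B')$ sits on $\partial_1 B'$ but \emph{outside} $S$. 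Disjointifying these $B'$ and summing over the spheres in $\mathcal{V}$, one gets a mass increment $\ge\frac{\varepsilon\delta^2}{64\chi}\mu(X)$ lodged in the annulus $Y_{j+1}\setminus Y_j$ around $Y$. This must then be iterated over $M\approx\frac{64\chi}{\varepsilon\delta^2}$ progressively thicker neighborhoods $Y_0\subset Y_1\subset\cdots\subset Y_M$ — each step consuming a fresh block of $p+1$ carpets — until the accumulated mass exceeds $\mu(X)$. Your proposal, by contrast, tries to reach a contradiction in one shot on a single sphere, which does not work: the induction hypothesis bounds $\mu(F\cap Y_B)$ in terms of $\mu(Y_B)$, not $\mu(X)$, and your pigeonhole lower bound on $\mu(F\cap Y_B)$ is in terms of $\mu(X)$, so no contradiction is forced. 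The iterated mass-accumulation is not optional bookkeeping; it is the proof.
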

\begin{rem*}
No attempt has been made to optimize the conditions. A slower rate
of growth in (\ref{enu:stack-growth}) would probably suffice.\end{rem*}
\begin{proof}
Define integers $Q(k,\chi,\varepsilon,\delta)$ recursively by\begin{eqnarray*}
Q(0,\chi,\varepsilon,\delta) & = & 1\\
Q(k,\chi,\varepsilon,\delta) & = & \left\lceil \frac{2\chi}{\varepsilon\delta}\right\rceil \cdot(1+\left\lceil \frac{64\chi}{\varepsilon\delta^{2}}\right\rceil )\cdot(1+Q(k-1,\chi,\frac{\varepsilon}{2},\frac{\delta}{8})\end{eqnarray*}
One may verify that $q\geq Q(k,\chi,\varepsilon,\delta)$, so it suffices
to prove the claim for $q=Q(k,\chi,\varepsilon,\delta)$; this we
do by induction on $k$.

For $k=0$ the claim is trivial, since then $\partial B=\emptyset$
for any ball $B$. We can therefore have $\mu(\partial B)\geq\varepsilon\mu(B)$
only when $\mu(B)=0$, implying that each point in $F$ has mass $0$,
so $\mu(F)=0$. 

Assume that the claim holds for $k-1$. We suppose that $X,R_{0},\chi,\mu,F,\mathcal{U}_{1},\ldots,\mathcal{U}_{q}$
satisfy the hypotheses of the theorem but $\mu(F)>\delta\mu(X)$,
and proceed to derive a contradiction.

\noindent \textbf{Preliminary disjointification}: We first pass to
a sub-sequence of the given carpets and extract a disjoint family
of balls from them. Let \[
N=q/\left\lceil \frac{2\chi}{\varepsilon\delta}\right\rceil \]
Since $q/N=\left\lceil \frac{2\chi}{\varepsilon\delta}\right\rceil $,
we may apply corollary \ref{cor:measure-disjointification} to the
stack $\{\mathcal{U}_{iN}\}_{1\leq i\leq q/N}$ obtained by choosing
each $N$-th element of the original stack. We get an $n_{0}\geq0$
and a collection \[
\mathcal{V}\subseteq\bigcup_{i\geq N(n_{0}+1)}\mathcal{U}_{i}\]
such that $\partial\mathcal{V}$ is well-separated, and such that,
setting\[
r=\maxrad\mathcal{U}_{n_{0}}\]
we have \[
\mu(F\cap\bigcup_{B\in\mathcal{V}}\partial_{2r}B)\geq\frac{1}{2}\mu(F)>\frac{\delta}{2}\mu(X)\]
We denote the union on the left hand side by\[
Y=\bigcup_{B\in\mathcal{V}}\partial_{2r}B\]
From now on we can forget about the carpets $\mathcal{U}_{i}$ for
$i<n_{0}N$ and $i>(n_{0}+1)N$; we work only with $\mathcal{V}$
and $\mathcal{U}_{i}$, $n_{0}M\leq i\leq(n_{0}+1)N$.

\noindent \textbf{Outline of the argument}. Roughly, our argument
proceeds as follows. The set $Y$ is made up of a union of thick spheres,
and each is of lower coarse dimension than $X$. Let $S$ be one of
these spheres, and suppose that some nontrivial fraction of its mass
comes from $F$. Consider the stack obtained by fixing a large $p$
(but still much smaller than $N$) and selecting from the stack $\mathcal{U}_{n_{0}N},\mathcal{U}_{n_{0}N+1}\ldots,\mathcal{U}_{n_{0}N+p}$
those balls centered in $F\cap S$. The induction hypothesis can be
applied to show that for a nontrivial fraction of points $x\in F\cap S$
there is a ball in this stack whose $1$-boundary \emph{with respect
to $S$ }contains only an $\varepsilon/2$-fraction of the balls mass.
However, with respect to $X$ these $1$-boundaries contain an $\varepsilon$-fraction
of the mass. Therefore, the difference -- an $\varepsilon/2$ of the
balls' mass -- lies outside $S$. Passing to a disjoint sub-collection
of these balls centered in $S$, we obtain a set of mass equal to
some nontrivial fraction of $F\cap S$, located outside of $S$ but
nearby it. Now, since the mass of $F\cap Y$ is large, the situation
described can be repeated for spheres $S$ comprising a non-negligible
fraction of $Y$, and the masses obtained outside each sphere will
be disjoint from each other. We conclude that, in the near vicinity
of $Y$ but disjoint from $Y$ there is a set with mass a small but
constant fraction of $\mu(X)$. Next, we repeat this argument, replacing
$Y$ with a small neighborhood of $Y$, and using the next $p$ carpets
$\mathcal{U}_{n_{0}N+p+1},\ldots,\mathcal{U}_{n_{0}N+2p}$, and get
another mass increment. After doing this sufficiently many times we
will have accumulated more mass than there is in $X$ altogether,
a contradiction. 

\noindent \textbf{Partitioning into further sub-stacks}. Let us denote\[
r_{i}^{+}=\maxrad\mathcal{U}_{i}\qquad,\qquad r_{i}^{-}=\minrad\mathcal{U}_{i}\]
and set\[
p=Q(k-1,\chi,\frac{\varepsilon}{2},\frac{\delta}{8})\]
We partition the carpets $\{\mathcal{U}_{i}\}_{n_{0}N+1\leq i\leq(n_{0}+1)N-1}$,
into sub-stacks of height $p+1$. More precisely, let $M=N/(p+1)$,
and for $0\leq j\leq M-1$ define \[
m(j)=n_{0}N+1+(p+1)\cdot j\]
so for each such $j$ we get the stack $\{\mathcal{U}_{m(j)+i}\}_{1\leq i\leq p}$.
The first thing to note is that all the balls in these stacks are
from carpets $\mathcal{U}_{i}$ below $\mathcal{U}_{(n_{0}+1)N}$,
which is the level where $\mathcal{V}$ begins. Consequently, the
radii of all these balls is much smaller than the radii of balls in
$\mathcal{V}$; indeed, the largest possible radius in our sub-stacks
is \[
r_{m(M-1)+p}^{+}=r_{(n_{0}+1)N-1}^{+}<\frac{1}{r_{(n_{0}+1)N-1}^{+}}r_{(n_{0}+1)N}^{+}<\frac{1}{2}\minrad\mathcal{V}\]
by (\ref{enu:stack-growth}) and (\ref{enu:stack-minrad}). 

\noindent \textbf{Thickening the set $Y$}: For $0\leq j\leq M-1$
it will be convenient to denote\[
\Delta_{j}B=\partial_{r_{m(j)}^{+}}B\]
and to thicken the set $Y$ by thickening each sphere in $Y$, obtaining
\[
Y_{j}=\bigcup_{B\in\mathcal{V}}\Delta_{j}B\]
Let us note several properties of the $Y_{j}$. First, clearly $Y_{1}\subseteq Y_{2}\subseteq\ldots\subseteq Y_{M}$,
and \[
Y\subseteq Y_{j}\]
implying\[
\mu(F\cap Y_{j})\geq\mu(F\cap Y)\geq\frac{\delta}{2}\mu(X)\]
To see this it is enough to show that $Y\subseteq Y_{0}$, and indeed
by (\ref{enu:stack-growth}) and (\ref{enu:stack-minrad}), \[
r_{m(0)}^{+}=r_{n_{0}N+1}^{+}>(r)^{2}\geq2r\]
so, since $Y_{0},Y$ are obtained, respectively, as the $r_{m(0)}^{+}$-thickening
and $r$-thickening of the same spheres, the claim follows. 

Second, each $Y_{j}$ is the disjoint union of the thick spheres $\Delta_{j}B$,
$B\in\mathcal{V}$. This follows from the inequality $r_{m}^{+}(j)<\frac{1}{2}\minrad\mathcal{V}$,
noted above,  and the fact that $\mathcal{V}$ is well-spaced.

Third, let $0\leq j\leq M-2$, and $x\in\partial_{r_{m(j)}^{+}}B$
for some $B\in\mathcal{V}$. Suppose that $B'\in\mathcal{U}_{m(j)+i}$
for some $1\leq i\leq p$ is centered at $x$; then $\partial_{1}B'\subseteq\Delta_{j+1}B$.
To see this, suppose that $y\in B$ and $z\in\partial_{1}B'$. Then\[
d(y,z)\leq d(y,x)+d(x,z)\leq r_{m(j)}^{+}+r_{m(j)+i}^{+}+1\leq2r_{m(j)+p}^{+}<r_{m(j+1)}^{+}\]
which proves the claim.

\noindent \textbf{Spheres in $Y_{j}$ containing a large proportion
of $F$}. For each $0\leq j\leq M-1$, define \[
\mathcal{\mathcal{W}}_{j}=\{B\in\mathcal{V}\,:\,\mu(F\cap\Delta_{j}B)>\frac{\delta}{4}\mu(\Delta_{j}B)\}\]
A Markov-type argument now shows that the spheres in $\mathcal{W}_{j}$
contain a large fraction of $X$: \begin{eqnarray*}
\frac{\delta}{2}\mu(X) & < & \mu(F\cap Y_{j})\\
 & = & \mu(F\cap\bigcup_{B\in\mathcal{V}}\Delta_{j}B)\\
 & = & \mu(F\bigcup_{B\in\mathcal{W}_{j}}\Delta_{j}B)+\mu(F\cap\bigcup_{B\in\mathcal{V}\setminus\mathcal{W}_{j}}\Delta_{j}B)\\
 & \leq & \mu(F\cap\bigcup_{B\in\mathcal{W}_{j}}\Delta_{j}B)+\frac{\delta}{4}\mu(\bigcup_{B\in\mathcal{V}\setminus\mathcal{W}_{j}}\Delta_{j}B)\\
 & \leq & \mu(F\cap\bigcup_{B\in\mathcal{W}_{j}}\Delta_{j}B)+\frac{\delta}{4}\mu(X)\end{eqnarray*}
and, rearranging, we get\[
\mu(F\cap\bigcup_{B\in\mathcal{W}_{j}}\Delta_{j}B)>\frac{\delta}{4}\mu(X)\]

\noindent \textbf{Applying the induction hypothesis to fat spheres}.
Fix $0\leq j\leq M-1$ and let $S=\Delta_{j}B$ for some $B\in\mathcal{W}_{j}$.
Put $\mu_{S}=\mu|_{S}$ i.e. $\mu_{S}(A)=\mu(A\cap S)$, and $F_{S}=F\cap S$.

Consider the stack $\{\mathcal{U}'_{t}\}_{1\leq t\leq p}$ over $F_{S}$
obtained by selecting from $\{\mathcal{U}_{m(j)+t}\}_{1\leq t\leq p}$
those balls with centers in $F_{S}$. This is a stack in $X$, but
from it we get a stack in $S$ by intersecting each ball with $S$. 

We claim that $S$, $\mu_{S}$, $F_{S}$ and this stack satisfy conditions
(\ref{enu:metric-space-parameters}) to (\ref{enu:stack-minrad})
of the theorem, with $k-1$ in place of $k$ and $r_{m(j)}^{+}R_{0}$
in place of $R_{0}$. Indeed, $S=\partial_{r_{m(j)}^{+}}B$, so it
has coarse dimension $\leq k-1$ at scales $\geq r_{m(j)}^{+}R_{0}$,
and $\chi(S)\leq\chi(X)$ because $S\subseteq X$. The relative growth
of radii in $\mathcal{U}'_{i}$ is inherited from $\mathcal{U}_{i}$.
Finally, $\mathcal{U}'_{1}\subseteq\mathcal{U}_{m(j)+1}$, so by the
growth assumption for the original stack, \[
\minrad\mathcal{U}'_{1}\geq r_{m(j)+1}^{-}\geq(r_{m(j)}^{+})^{2}\geq r_{m(j)}^{+}R_{0}\]
and clearly also $\minrad\mathcal{U}'_{1}\geq2$, which verifies (\ref{enu:stack-minrad}).

Let \[
F'_{S}=\{x\in F_{S}\,:\,\mu_{S}(\partial_{1}B')\geq\frac{\varepsilon}{2}\mu_{S}(B')\mbox{ for every }B'\in\mathcal{U}'_{t}\,,\,1\leq t\leq p\}\]
Applying the induction hypothesis to the stack obtained by restricting
each $\mathcal{U}'_{t}$ to balls with center in $F'_{S}$, and recalling
the definition of $p$, we find that \[
\mu_{S}(F'_{S})\leq\frac{\delta}{8}\mu_{S}(S)\]
Since $S=\Delta_{j}B$ for some $B\in\mathcal{W}_{j}$ we know that
$\mu_{S}(F_{S})>\frac{\delta}{4}\mu_{S}(S)$; so \[
\mu_{S}(F_{S}\setminus F'_{S})>\frac{\delta}{8}\mu_{S}(S)=\frac{\delta}{8}\mu(S)\]

\noindent \textbf{Estimating the mass outside of a fat sphere}. For
each $x\in F_{S}\setminus F'_{S}$ there is some $1\leq t\leq p$
and $B'\in\mathcal{U}'_{t}$, centered at $x$, with \[
\mu(\partial_{1}B'\cap S)=\mu_{S}(\partial_{1}B')\leq\frac{\varepsilon}{2}\mu_{S}(B'\cap S)\]
But by (5) we have \[
\mu(\partial_{1}B')\geq\varepsilon\mu(B')\geq\varepsilon\mu_{S}(B')\]
therefore, \[
\mu(\partial_{1}B'\setminus S)\geq\frac{\varepsilon}{2}\mu(B'\cap S)\]

\noindent \textbf{Estimating the mass between $Y_{j}$ and $Y_{j+1}$}.
Applying corollary \ref{cor:measure-disjointification} to each of
the balls above as $x$ runs over $F_{S}\setminus F'_{S}$, we choose
a disjoint collection $\mathcal{C}$ of balls centered in $F_{S}\setminus F'_{S}$
satisfying the last inequality, and which cover a $\frac{1}{\chi}$-fraction
of $F_{S}\setminus F'_{S}$ and so has mass $>\frac{\delta}{8\chi}\mu(S)$.
The corresponding union of $1$-spheres, since each contains an $\varepsilon$-fraction
of the mass of the solid ball, has mass $>\frac{\varepsilon\delta}{8\chi}\mu(S)$;
and since at least half this mass lies outside of $S$, we get\[
\mu((\bigcup_{B'\in\mathcal{C}}\partial_{1}B')\setminus S)\geq\frac{\varepsilon\delta}{16\chi}\mu(S)\]

\noindent The set on the left hand side is in the complement of $S=\Delta_{j}B$,
but certainly lies inside $\Delta_{j+1}B$, and these sets are disjoint
for distinct $B\in\mathcal{V}$. So the contribution of mass near
each $S$ is disjoint from the contributions of other $S$'s, so\begin{eqnarray*}
\mu(Y_{j+1}\setminus Y_{j}) & \geq & \sum_{B\in\mathcal{W}_{j}}\mu(\Delta_{j+1}B\setminus\Delta_{j}B)\\
 & \geq & \sum_{B\in\mathcal{W}_{j}}\frac{\varepsilon\delta}{16\chi}\mu(\Delta_{j}B)\\
 & = & \frac{\varepsilon\delta}{16\chi}\mu(\bigcup_{B\in\mathcal{W}_{j}}\Delta_{j}B)\\
 & \geq & \frac{\varepsilon\delta}{16\chi}\cdot\frac{\delta}{4}\mu(X)\end{eqnarray*}
because $\mu(\cup_{B\in\mathcal{W}_{j}}\Delta_{j}B)\geq\mu(Y_{j})>\frac{\delta}{4}\mu(X)$.

\noindent \textbf{The punchline}. The number of $Y_{j}$'s has been
arranged to be\[
M=\frac{N}{p+1}=\frac{q}{\left\lceil 2\chi/\varepsilon\delta\right\rceil (p+1)}\geq\frac{64\chi}{\varepsilon\delta^{2}}+1\]
so the relation $Y_{j}\subseteq Y_{j+1}$ and $\mu(Y_{j+1}\setminus Y_{j})>\frac{\varepsilon\delta}{16\chi}\cdot\frac{\delta}{4}\mu(X)$
imply \[
\mu(Y_{M})\geq M\cdot\frac{\varepsilon\delta}{16\chi}\cdot\frac{\delta}{4}\mu(X)>\mu(X)\]
which is the desired contradiction.
\end{proof}
It is not hard to see that a similar result holds if $\mu$ is a Borel
measure, $F$ is a Borel set and the carpets are measurable (i.e.
the function $r_{n}:F\rightarrow\mathbb{R}^{+}$ describing the radii
of the balls in the $n$-th carpet is measurable). One way to see
this is to discretize the data. For a fine partition $\mathcal{P}=\{P_{i}\}$
of $X$, choose a representative $x_{i}\in P_{i}$ in each atom, set
$r'_{n}(x)=\int_{P_{i}}r_{n}$, and replace $\mu$ with the atomic
measure supported on the $x_{i}$ with $\mu'(\{x_{i}\})=\mu(P_{i})$.
Applying the discrete lemma above to $\mu'$ and the new stack, with
suitably modified parameters, we can deduce the result for the original
measure.

\section{\label{sec:Proof-of-the-ratio-theorem}Proof of the ratio theorem
for $\mathbb{Z}^{d}$}

Given what we have proved so far, theorems \ref{thm:chacon-ornstein}
and \ref{thm:main} now follow by fairly standard arguments.

\subsection*{Proof of theorem \ref{thm:chacon-ornstein}: }

This is a standard application of the transference together with theorem
\ref{thm:main-lemma}. Let $\mathbb{Z}^{d}$ act on a $\sigma$-finite
measure space $(\Omega,\mathcal{B},\mu)$ by non-singular transformations.
By passing to an equivalent measure, we may assume that $\mu(\Omega)=1$.
Let $T$ act by translation on $L^{\infty}$, and let $\widehat{T}$
be the dual action of $T$ on $L^{1}\subseteq(L^{\infty})^{*}$, which
is a linear, order-preserving isometry defined by the condition $\int\widehat{T}^{u}f\cdot gd\mu=\int f\cdot T^{u}gd\mu$
for $f\in L^{\infty}$ and $g\in L^{1}$, and explicitly by $\widehat{T}^{u}f(\omega)=f(T^{-u}\omega)\cdot\frac{dT^{u}\mu}{d\mu}$.

Fix a norm $\left\Vert \cdot\right\Vert $ on $\mathbb{R}^{d}$, and
suppose $\cdim_{R_{0}}\mathbb{R}^{d}=k$ and $\chi(\mathbb{R}^{d})=\chi$
for appropriate parameters $R_{0},k,\chi$. 

Let $1\leq f\in L^{\infty}\subseteq L^{1}$. We are out to prove that
\[
s_{n}(\omega)=\frac{\sum_{u\in\partial_{1}B_{n}}\widehat{T}^{u}f(\omega)}{\sum_{u\in B_{n}}\widehat{T}^{u}f(\omega)}\rightarrow0\]
for a.e. $\omega$ (from this the case of thick boundaries $\partial_{t}$
follows by rescaling the norm). Set \[
A_{\varepsilon}=\{\omega\in\Omega\,:\,\limsup s_{n}(\omega)>\varepsilon\}\]
and suppose that $\mu(A_{\varepsilon})>0$ for some $\varepsilon$.
We construct a sequence \[
R_{0}=r_{0}^{-}=r_{0}^{+}\leq r_{1}^{-}\leq r_{1}^{+}\leq r_{2}^{-}\leq r_{2}^{+}\leq\ldots\]
satisfying $r_{i}^{-}\geq(r_{i-1}^{+})^{2}$ and $r_{1}^{-}\geq\max\{2,R_{0}\}$,
and a set of points $A\subseteq A_{\varepsilon}$, so that for every
$\omega\in\Omega$ and $i\geq1$ there is an $n_{i}=n_{i}(\omega)\in(r_{i}^{-},r_{i}^{+})$
with $s(n_{i},\omega)>\varepsilon$, and $\mu(A)>\frac{1}{2}\mu(A_{\varepsilon})$.
We do this by recursion, so that going into the $i$-th stage we have
defined $r_{j}^{\pm}$ for $j<i$ and sets $C_{0}\subseteq C_{1}\subseteq\ldots\subseteq C_{i-1}\subseteq A_{\varepsilon}$
satisfying the above and $\mu(C_{j})\geq(\frac{1}{2}+\frac{1}{j+1})\mu(A_{\varepsilon})$.
In order to define $r_{i}^{\pm}$ and $C_{i}$, first set $r_{i}^{-}=(2\vee r_{i-1}^{+})^{2}$.
Now, since $C_{i-1}\subseteq A_{\varepsilon}$, for every $\omega\in C_{i-1}$
there is an $n=n(\omega)\geq r_{i}^{-}$ with $s(n,\omega)>\varepsilon$;
so we can choose $r_{i}^{+}$ so that $n(\omega)\leq r_{i}^{+}$ on
a subset of $C_{i-1}$ of measure $>(\frac{1}{2}+\frac{1}{i+1})\mu(A_{\varepsilon})$.
This set will be $C_{i}$, and $A=\cap_{j=1}^{\infty}C_{j}$.

We are now ready to apply the transference principle. Fix $\delta$
and $n>r_{q}^{+}$, where $q=q(k,\chi,\varepsilon,\delta)$ is as
in theorem \ref{thm:main-lemma}. Then\[
\mu(A)=\int1_{A}d\mu=\frac{1}{|B_{n}|}\sum_{u\in B_{n}}\int\widehat{T}^{u}1_{A}d\mu=\frac{1}{|B_{n}|}\int\sum_{u\in B_{n}}\widehat{T}^{u}1_{A}d\mu\]
Next, we bound the sum $\sum_{u\in B_{n}}\widehat{T}^{u}1_{A}$. Fix
$\omega\in\Omega$ and consider the measure $\nu=\nu_{\omega,n}$
on $B_{2n}$ by $\nu(\{u\})=\widehat{T}^{u}f(\omega)$. Let \[
U=U_{\omega,n}=\{u\in B_{n}\,:\, T^{-u}\omega\in A\}\]
By the definition of $A$ there is a stack of height $q$ over $U$
satisfying the hypothesis of theorem \ref{thm:main-lemma}, and all
the balls in the stack are of radius $\leq r_{q}^{+}<n$, implying
that they are contained in $B_{2n}$. Thus, by theorem \ref{thm:main-lemma},
\begin{eqnarray*}
\nu(U) & \leq & \delta\nu(B_{2n})\\
 & = & \delta\sum_{u\in B_{2n}}\widehat{T}^{u}f(\omega)\\
 & \leq & \delta\left\Vert f\right\Vert _{\infty}\sum_{u\in B_{2n}}\widehat{T}^{u}1\end{eqnarray*}
We have arranged things so that \[
\sum_{u\in B_{n}}\widehat{T}^{u}1_{A}(\omega)=\nu_{\omega,n}(U_{\omega,n})\]
therefore \begin{eqnarray*}
\sum_{u\in B_{n}}\widehat{T}^{u}1_{A}(\omega) & \leq & \nu_{\omega,n}(U_{\omega,n})\\
 & \leq & \delta\left\Vert f\right\Vert _{\infty}\sum_{u\in B_{2n}}\widehat{T}^{u}1\end{eqnarray*}
dividing by $|B_{n}|$ and integrating we get\begin{eqnarray*}
\mu(A) & \leq & \frac{1}{|B_{n}|}\int\delta\left\Vert f\right\Vert _{\infty}\sum_{u\in B_{2n}}\widehat{T}^{u}1d\mu\\
 & \leq & \frac{\delta\left\Vert f\right\Vert _{\infty}}{|B_{n}|}|B_{2n}|\int1d\mu\\
 & \leq & 2^{d}\left\Vert f\right\Vert _{\infty}\delta\end{eqnarray*}
because $\frac{|B_{n}|}{|B_{2n}|}\leq2^{d}$. The right hand side
can be made arbitrarily small, so $\mu(A)=0$; hence also $\mu(A_{\varepsilon})=0$.

Finally, $s_{n}(\omega)\rightarrow0$ if and only if $\omega\notin\cup_{m=1}^{\infty}A_{1/m}$,
and the set on the right is seen to have measure $0$. This completes
the proof of theorem \ref{thm:chacon-ornstein}.\hfill\qedsymbol

\subsection*{Proof of theorem \ref{thm:main}:}

The proof is standard. We first prove the case $g\equiv1$. Consider
the space \[
\mathcal{F}=\mbox{span}\{1,f-\widehat{T}^{v}f\,:\, v\in\mathbb{Z}^{d}\mbox{ and }f\in L^{\infty}\}\]
One shows that $\mathcal{F}$ is dense in $L^{1}$; the proof follows
the same lines as Riesz's proof of the mean ergodic theorem, using
the duality relation $(L^{1})^{*}=L^{\infty}$ instead of self-duality
of $L^{2}$. See \cite{F07,A97}. 

Next, one shows that the ratios $R_{n}(f,1)$ converge for every member
of $\mathcal{F}$. Indeed, note that $R_{n}(1,1)\equiv1$; whereas
if $f\in L^{\infty}$ then the ratios $R_{n}(f-T^{v}f,1)$ satisfy
\begin{eqnarray*}
|\frac{\sum_{u\in B_{n}}\widehat{T}^{u}(f-\widehat{T}^{v}f)}{\sum_{u\in B_{n}}\widehat{T}^{u}1}| & \leq & \frac{\sum_{u\in\partial_{\left\Vert v\right\Vert }B_{n}}\widehat{T}^{u}|f|}{\sum_{u\in B_{n}}\widehat{T}^{u}1}\\
 & = & \frac{\sum_{u\in\partial_{\left\Vert v\right\Vert }B_{n}}\widehat{T}^{u}|f|}{\sum_{u\in B_{n}}\widehat{T}^{u}|f|}\cdot\frac{\sum_{u\in B_{n}}\widehat{T}^{u}|f|}{\sum_{u\in B_{n}}\widehat{T}^{u}1}\\
 & \leq & \frac{\sum_{u\in\partial_{\left\Vert v\right\Vert }B_{n}}\widehat{T}^{u}|f|}{\sum_{u\in B_{n}}\widehat{T}^{u}|f|}\cdot\left\Vert f\right\Vert _{\infty}\end{eqnarray*}
and the right hand side converges to $0$ a.e. by theorem \ref{thm:chacon-ornstein}.
From this it follows that $R_{n}(f,1)\rightarrow\int f$ for any $f\in\mathcal{F}$. 

The case $g\equiv1$ is concluded by applying the maximal inequality
to get convergence on the closure of $\mathcal{F}$, which is all
of $L^{1}$. This standard argument can be found in \cite{A97}. It
is also easy to check that the correct limit is obtained.

Finally, the case of general $g\in L^{1}$ is deduced from the equality
$R_{n}(f,g)=R_{n}(f,1)/R_{n}(g,1)$. \hfill\qedsymbol

\bibliographystyle{plalpha}
\bibliography{bib}

\end{document}